\newcommand\cyr{%
\renewcommand\rmdefault{wncyr}%
\renewcommand\sfdefault{wncyss}%
\renewcommand\encodingdefault{OT2}%
\normalfont \selectfont} \DeclareTextFontCommand{\textcyr}{\cyr}
\newcommand{\be}{\begin{equation}}
\newcommand{\ee}{\end{equation}}
\newcommand{\bes}{\begin{equation*}}
\newcommand{\ees}{\end{equation*}}
\newcommand{\bH}{\mathbb{H}}
\newcommand{\N}{\mathbb{N}}
\newcommand{\R}{\mathbb{R}}
\newcommand{\X}{\mathbb{X}}
\newcommand{\cB}{\mathcal{B}}
\newcommand{\cF}{\mathcal{F}}
\newcommand{\mcC}{\mathcal{C}}
\newcommand{\cP}{\mathcal{P}}
\newcommand{\sI}{\mathscr{I}}
\newcommand{\sfE}{\mathsf{E}}
\newcommand{\sfF}{\mathsf{F}}
\newcommand{\sfV}{\mathsf{V}}
\newcommand{\sfX}{\mathsf{X}}
\newcommand{\sfY}{\mathsf{Y}}
\newcommand{\cC}{\mathcal{C}}
\renewcommand{\rmdefault}{cmr} 
\renewcommand{\sfdefault}{cmr} 
\newtheorem{theorem}{Theorem}[section]
\theoremstyle{plain}
\newtheorem{definition}{Definition}[section]
\newtheorem{example}{Example}[section]
\newtheorem{proposition}{Proposition}[section]
\newtheorem{remark}{Remark}[section]
\newtheorem{remarks}{Remarks}[section]
\newtheorem*{notation*}{Notation}
\numberwithin{equation}{section}
\DeclareMathOperator\Lip{Lip}
\DeclareMathOperator\esssup{{ess\, sup}}
\DeclareMathOperator\Sc{Sc}
\DeclareMathOperator\Ve{Vec}
\DeclareMathOperator\gr{graph}
\newcommand{\st}{ : }
\newcommand{\n}[1]{{\left\|{#1}\right\|}}
\newcommand{\abs}[1]{\left\vert{#1}\right\vert}
\begin{document}

\title[Fractal Interpolation]{Fractal Interpolation: From Global to Local, to Nonstationary and Quaternionic}

\author{Peter R. Massopust}
\address{Centre of Mathematics, Technical University of Munich, Boltzmannstr. 3, 85748 Garching b. Munich, Germany}
\email{massopust@ma.tum.de}

\begin{abstract}
We present an introduction to fractal interpolation beginning with a global set-up and then extending to a local, a non-stationary, and finally the novel quaternionic  setting. Emphasis is placed on the overall perspective with references given to the more specific questions.
\end{abstract}
\keywords{Iterated function system (IFS), Banach space, fractal interpolation, non-stationary fractal interpolation, quaternions}%
\subjclass{28A80, 16H05, 41A30, 46E15}

\maketitle 

\section{Introduction}

Over the last decades, fractal interpolation and approximation have been extensively research. This research originated with \cite{B2} where a special set-up was used to define so-called affine fractal interpolation functions. The graphs of these affine fractal interpolation functions are the attractors of a class of iterated function systems and thus geometrically motivated. An analytic construction of general fractal functions originated in \cite{bedford,dubuc1,dubuc2} where the concept of a Read-Bajractarevi\'c operator is first encountered. Numerous constructions of fractal functions based on Read-Bajractarevi\'c operators satisfying given interpolation and approximation conditions followed. Some of these constructions are introduced and summarized in \cite{massopust,massopust1}. The number of publications in fractal interpolation theory is enormous and the interested reader may want to search for fractal functions using terms such as ``hidden variable,'' ``$V$-variable,'' ``coalescent,'' ``super,'' and ``$\alpha$-fractal functions'' to name just a few. 

The intend of this chapter is to introduce the reader to the concept of fractal interpolation and its extensions from a global to a local setting, then to non-stationarity and finally to a quaternionic setting. In a certain sense, these are the main set-ups with the possible exclusion of \emph{unbounded} fractal interpolation \cite{m7}. It is understandable that such an endeavor must necessarily restrict itself to the main points of each construction and setting. However, the exposition will give the reader an overall perspective of the issues involved and the techniques used, and can be used as a starting point for a deeper investigation into each the topics.

The outline of this chapter is as follows. Section 2 introduces the global setting of fractal interpolation and exhibits a relationship between the Read-Bajractarevi\'c operator and the solution to a canonically associated system of functional equations. In the next section, we extend global interpolation to a local set-up giving more flexibility to the construction. This type of interpolation found deep applications to fractal imaging and fractal compression \cite{BH}. In Section 4, the recently introduced concept of non-stationary fractal interpolation is presented and it is shown that backward trajectories allow distinct features to be delineated at different interpolation scales. The final Section 5, describes the novel setting of fractal interpolation in the theory of quaternions and shows that the non-commutative character of quaternions introduces even more intricate fractal patterns.

\section{Global Fractal Interpolation}\label{GFI}

The purpose of the current section is to introduce global fractal interpolation and to relate the global fractal interpolant to the solution of a system of functional equations. We see that this system of functional equations defines in a canonical way a Read-Bajractarevi\'c (RB) operator and vice versa. This relationship will be encountered several times in the subsequent sections as well.

In the following, $(\sfE,d_\sfE)$ denotes a normed space and $(\sfF,d_\sfF)$ a Banach space. For $n\in \N$, we write $\N_n :=\{1,\ldots, n\}$ for the initial segment of the natural numbers $\N$ of length $n$. 

For a given normed space $(\sfE, \n{\cdot}_\sfE)$ and a map $f: \sfE \to\sfE$, we define the Lipschitz constant associated with $f$ by
\[
\Lip (f) := \sup_{x,y \in \sfE, x \neq y} \frac{\n{f(x)-f(y)}_\sfE}{\n{x-y}_\sfE}.
\]
The map $f$ is called Lipschitz if $\Lip (f) < + \infty$ and a contraction (on $\sfE$) if $\Lip (f) < 1$.

\subsection{Bounded Solutions}
Let $\sfX$ be a nonempty bounded subset of $\sfE$. Suppose we are given a finite family $\{l_i\}_{i = 1}^{n}$ of injective contractions $\sfX\to \sfX$ generating a partition of $\sfX$ in the sense that
\begin{align}
&\forall\;i, j\in \N_n, i\neq j: l_i(\sfX)\cap l_j(\sfX) = \emptyset;\label{c1}\\
&\sfX = \bigcup_{i=1}^n l_i(\sfX).\label{c2}
\end{align}
For simplicity, we write $\sfX_i := l_i(\sfX)$.

Given the above set-up, we are looking for a global function $\psi:\sfX = \bigcup\limits_{i=1}^n \sfX_i\to\sfF$ satisfying $n$ functional equations of the form
\be\label{psieq}
\psi (l_i (x)) = q_i (x) + s_i (x) \psi (x), \quad\text{on $\sfX$ and for $i\in \N_n$},
\ee
where for each $i\in\N_n$, $s_i$ is a given bounded function $\sfX\to\R$ and $q_i$ a bounded function $\sfX\to\sfF$. Recall that a function $f:\sfX\to\sfF$ is called \emph{bounded} if there exists a finite $M>0$ such that $\n{f(x)}_\sfF \leq M$, for all $x\in \sfX$.

The idea is to consider \eqref{psieq} as the fixed point equation for an associated affine operator acting on an appropriately defined function space.

To this end, let $\cB (\sfX,\sfF) := \{f:\sfX\to\sfF : \text{$f$ is bounded}\}$ denote the the Banach space of bounded functions equipped with the supremums norm $\n{f} := \sup\limits_{x\in \sfX} \n{f(x)}_\sfF$. 

On the Banach space $\cB (\sfX,\sfF)$, we define an affine operator $T: \cB (\sfX,\sfF)\to \cB (\sfX,\sfF)$, called a Read-Bajractarevi\'c (RB) operator, by 
\be\label{eq3.17}
T f (x) = (q_i\circ l_i^{-1})(x) + (s_i\circ l_i^{-1})(x)\cdot (f\circ l_i^{-1})(x), 
\ee 
for $x\in \sfX_i$ and $i\in \N_n$, or, equivalently, by
\begin{align*}
T f (x) &= \sum_{i=1}^n (q_i\circ l_i^{-1})(x)\, \chi_{\sfX_i}(x) + \sum_{i=1}^n (s_i\circ l_i^{-1})(x)\cdot (f\circ l_i^{-1})(x)\, \chi_{\sfX_i}(x)\\
&= T(0) + \sum_{i=1}^n (s_i\circ l_i^{-1})(x)\cdot (f\circ l_i^{-1})(x)\, \chi_{\sfX_i}(x),\quad x\in \sfX,
\end{align*}
where $\chi_S$ denotes the characteristic function of a set $S$: $\chi_S(x) = 1$, if $x\in S$, and $\chi_S(x) = 0$, otherwise.

The following result is well-known (see, for instance, \cite{B2,massopust1}) but for the sake of completeness we reproduce the proof. We also refer the interested reader to \cite{SB} where a similar set-up is considered.

\begin{theorem}\label{sol}
The system of functional equations \eqref{psieq} has a unique bounded solution $\psi: \sfX\to\sfF$ provided that
\begin{enumerate}
\item conditions \eqref{c1} and \eqref{c2} are satisfied, and
\item $s:= \max\limits_{i\in \N_n} \sup\limits_{x\in \sfX} |s_i(x)| < 1$.
\end{enumerate} 
\end{theorem}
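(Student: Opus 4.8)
The plan is to recast the system \eqref{psieq} as a fixed-point equation for the RB operator $T$ of \eqref{eq3.17} on the Banach space $\cB(\sfX,\sfF)$ and then invoke the Banach fixed point theorem. First I would check that $T$ is well defined and maps $\cB(\sfX,\sfF)$ into itself. Conditions \eqref{c1} and \eqref{c2} guarantee that every $x\in\sfX$ lies in exactly one $\sfX_i=l_i(\sfX)$, and the injectivity of $l_i$ makes $l_i^{-1}:\sfX_i\to\sfX$ well defined, so the branchwise formula \eqref{eq3.17} unambiguously assigns a value $Tf(x)$ to each $x\in\sfX$. Boundedness of $Tf$ follows from the boundedness of the $q_i$ and $s_i$ together with that of $f$: for $x\in\sfX_i$ one has $\n{Tf(x)}_\sfF\le \sup_{y\in\sfX}\n{q_i(y)}_\sfF + s\,\n{f}$, and taking the supremum over $i\in\N_n$ and $x\in\sfX$ keeps this finite.

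Next I would establish the contraction estimate. For $f,g\in\cB(\sfX,\sfF)$ and $x\in\sfX_i$ the constant terms cancel, leaving
\[
Tf(x)-Tg(x)=(s_i\circ l_i^{-1})(x)\cdot\big((f-g)\circ l_i^{-1}\big)(x),
\]
so that $\n{Tf(x)-Tg(x)}_\sfF\le \abs{s_i(l_i^{-1}(x))}\,\n{(f-g)(l_i^{-1}(x))}_\sfF\le s\,\n{f-g}$. Since this holds for every branch $i$ and, by \eqref{c1}--\eqref{c2}, every $x\in\sfX$ belongs to exactly one $\sfX_i$, taking the supremum over $x\in\sfX$ yields $\n{Tf-Tg}\le s\,\n{f-g}$ with $s<1$ by hypothesis (2). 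Because $\sfF$ is a Banach space, $\cB(\sfX,\sfF)$ is complete, so the Banach fixed point theorem produces a unique $\psi\in\cB(\sfX,\sfF)$ with $T\psi=\psi$.

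Finally I would verify that the fixed points of $T$ are precisely the bounded solutions of \eqref{psieq}. Given $\psi$ with $T\psi=\psi$, fix $i\in\N_n$ and $x\in\sfX$ and put $y:=l_i(x)\in\sfX_i$; then $l_i^{-1}(y)=x$, and evaluating the branch-$i$ formula at $y$ gives $\psi(l_i(x))=\psi(y)=T\psi(y)=q_i(x)+s_i(x)\psi(x)$, which is \eqref{psieq}. Conversely, if $\psi\in\cB(\sfX,\sfF)$ satisfies \eqref{psieq}, then for any $y\in\sfX$, writing $y=l_i(x)$ for the unique $i$ with $y\in\sfX_i$ and $x=l_i^{-1}(y)$ reverses this computation to give $T\psi(y)=\psi(y)$. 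Hence \eqref{psieq} has a unique bounded solution, namely the fixed point of $T$.

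As for the main obstacle: there is no deep difficulty here, since the content lies entirely in organizing the branchwise bookkeeping so that the partition hypotheses \eqref{c1}--\eqref{c2} are invoked exactly where needed, namely for the well-definedness of $T$ and for passing from a per-branch bound to a global supremum bound. The two points I would be careful to state explicitly are that the $s_i$ and $q_i$ are only assumed bounded (not continuous), and that completeness of $\cB(\sfX,\sfF)$ rests on $\sfF$ being a Banach space, which is what licenses the fixed point argument.
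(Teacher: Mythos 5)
Your proposal is correct and follows essentially the same route as the paper: recast \eqref{psieq} as the fixed-point equation for the RB operator \eqref{eq3.17} on $\cB(\sfX,\sfF)$, verify self-mapping and the contraction estimate $\n{Tf-Tg}\le s\n{f-g}$, and invoke the Banach fixed point theorem. Your extra care in spelling out the well-definedness of $T$ via \eqref{c1}--\eqref{c2} and the exact correspondence between fixed points of $T$ and bounded solutions of \eqref{psieq} is a welcome elaboration of what the paper states more tersely.
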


\begin{proof}
First note that, as the mappings $l_i$ are injective, the right-hand side of \eqref{psieq} can be written as the right-hand side of \eqref{eq3.17}.

As the functions $l_i$, $q_i$, and $s_i$ are all assumed to be bounded, $T$ maps $\cB (\sfX,\sfF)$ into itself. For all $f,g\in \cB (\sfX,\sfF)$, we have that
\begin{align*}
\sup_{x\in\sfX}\n{Tf(x) - Tg(x)}_\sfF &= \max_{i\in \N_n}\sup_{x\in\sfX_i} \n{(s_i\circ l_i^{-1})(x)\cdot (f-g)\circ l_i^{-1}(x)}_\sfF\\
&= \max_{i\in \N_n}\sup_{\xi\in\sfX} \n{s_i(\xi)\cdot (f-g)(\xi)}_\sfF\\
&\leq \max_{i\in \N_n}\sup\limits_{x\in \sfX} |s_i(x)| \sup_{x\in\sfX}\n{(f-g)(x)}_\sfF,
\end{align*}
from which it follows that
\[
\n{Tf-Tg} \leq s \n{f-g}.
\]
Hence, $T$ is contractive on the Banach space $\cB (\sfX,\sfF)$ and therefore, by the Banach Fixed Point Theorem, has a unique fixed point $\psi\in\cB (\sfX,\sfF)$. This fixed point solves the functional equations \eqref{psieq}.
\end{proof}

\begin{remarks}\hfill
\begin{enumerate}
\item The fixed point $\psi\in\cB (\sfX,\sfF)$ of the RB operator $T$ is also called a \emph{bounded fractal function}. In this context, Eqn. \eqref{psieq} is also referred to as a \emph{self-referential equation} for $\psi$. 
\item The self-referential equation $T\psi = \psi$ expresses the fractal nature of the $\gr\psi$: It is made up of a finite number of copies of itself with each copy being supported on the partitioning sets $\sfX_i$. Hence, the terminology \emph{fractal function} for $\psi$.
\item The proof of Banach's Fixed Point Theorem also provides an algorithm for the construction of $\psi$: Choose \emph{any} function $\psi_0\in \cB (\sfX,\sfF)$ and iteratively define the following sequence of functions: 
$$
\psi_k := T \psi_{k-1}, \quad k\in \N. 
$$
Then, $\psi$ is given by $\psi = \lim\limits_{k\to\infty}\psi_k$ where the limit is taking with respect to the norm $\n{\cdot}$ on  $\cB (\sfX,\sfF)$.
\item The afore-mentioned algorithm for the construction of $\psi$ together with the proof of the Banach Fixed Point Theorem gives an error estimate as well, namely,
\[
\n{\psi - \psi_{k}} \leq \frac{s^k}{1-s} \n{\psi_1 - \psi_0}, \quad k\in \N.
\]
\item The fixed point $\psi$ depends on $n$, the partition $(\sfX_i : i\in\N_n)$, and the functions $s_i$ and $q_i$ with different choices yielding different fractal functions.
\item Emphasizing the dependence of $\psi$ on the functions $s_i$, the expression $s$-fractal function can be found in the literature. (See, for instance, \cite{N}.) In this context, one considers a fractal function as the image under an operator $\cF^s$ associating with a given (non-fractal) function its fractal analogue.
\item Functional equations such as \eqref{psieq} exhibit connections to so-called \emph{fractels} \cite{bhm2,massopust1} and also to the approximation of rough functions \cite{BHVV}.
\end{enumerate}
\end{remarks}

Conditions \eqref{c1} and \eqref{c2} cannot be relaxed without adding some compatibility conditions to guarantee that the RB operator $T$ has the form given by Eqn. \eqref{eq3.17}. Should Eqn. \eqref{c1} not be satisfied, one would have to impose in our current setting the following compatibility conditions:
\begin{align}\label{wd}
\forall x_1,&x_2\in\sfX:\nonumber\\
& \; l_i (x_1) = l_j (x_2)\;\;\Longrightarrow\;\; q_i(x_1) + s_i (x_1) \psi (x_1) = q_j(x_2) + s_ j(x_2) \psi (x_2).
\end{align}
We refer to \cite{SB,SB1} for more details regarding this issue.\\

As an application of the above approach to obtain solutions to functional equations of the form \eqref{psieq} or equivalently finding the unique fixed points of the associated RB operator \eqref{eq3.17}, we provide the following example.
\begin{example}\label{ex1}
Let $\sfE := \R =: \sfF$ together with the Euclidean norm $\abs{\cdot}$. Further. let $\sfX := [0,1)\subset\sfE$. Assume that we are given two injective contractions $l_i:[0,1)\to [0,1)$, $i=1,2$, with $l_1(x) := \frac13 x$ and $l_2(x) := \frac23 x + \frac13$. Hence, $\sfX_1 = [0,\frac13)$ and $\sfX_2 = [\frac13,1)$. Clearly, $\sfX = \sfX_1\cup\sfX_2$ and $\sfX_1\cap\sfX_2 = \emptyset$. 

Now choose $q_1(x) = -1$, $q_2(x) = {x}$, $s_1(x) = \frac12 \sin(x)$, and $s_2(x):=-\frac23 \cos(x)$. The system of functional equations and the associated RB operator read then
\[
\psi(\tfrac13 x) = -1 + \tfrac12 \sin(x)\psi(x)\quad\text{and}\quad\psi(\tfrac23 x + \tfrac13) = {x} - \tfrac23\cos(x)\psi(x),
\]
and
\[
Tf(x) = \begin{cases} -1 + \frac12\sin(3x) f(3x), & 0\leq x < \tfrac13;\\
{3x-1} - \frac23\cos(\tfrac12(3x-1))f(\tfrac12(3x-1)), & \tfrac13 \leq x < 1,
\end{cases}
\]
respectively. 

As $s = \frac23 < 1$, $T$ is contractive. A plot of the solution, respectively, fixed point $\psi$, is shown in Fig. 1.
\begin{figure}[h!]
\begin{center}
\includegraphics[width=7cm, height= 4cm]{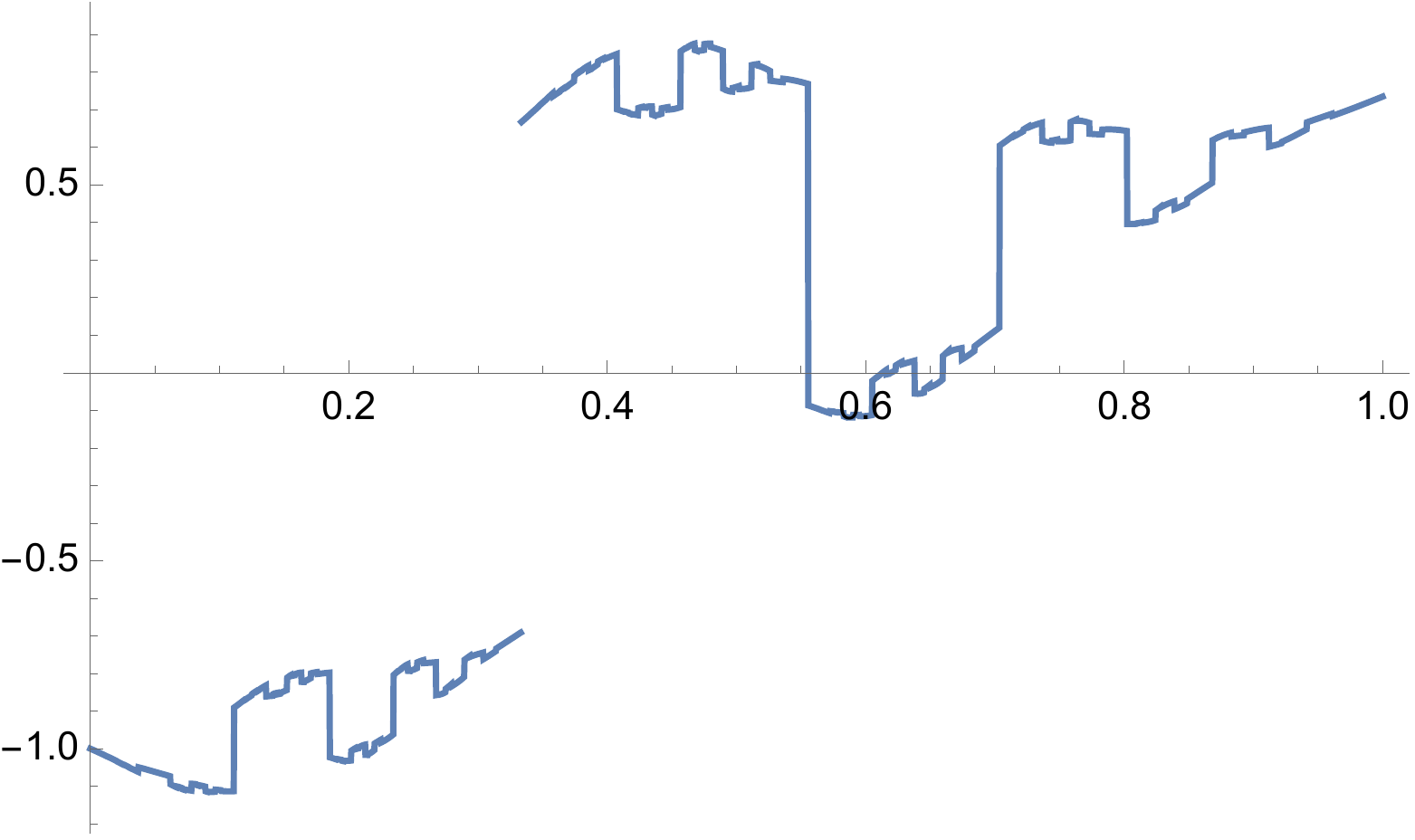}
\caption{The solution/fixed point $\psi$.}\label{fig1}
\end{center}
\end{figure}
\end{example}
\subsection{$L^p$ solutions}
In the following, we set $\sfX\subset\sfE:=\R^m$ and $\sfY:=\R^k$ where the Euclidean spaces $\R^m$ and $\R^k$ are endowed with the corresponding canonical Euclidean norms.

Recall that the (real) Lebesgue spaces $L^p (\sfX, \R^k)$, where $\sfX\subset \R^m$ is nonempty, are defined as consisting of (equivalence classes of) functions $f:\sfX\to\R^k$ for which
\[
\n{f}_p :=\begin{cases}\left(\displaystyle{\int_\sfX \n{f(x)}^p dx} \right)^{1/p}, & 1\leq p < \infty;\\ \\
\esssup_{x\in\sfX} \n{f(x)}, & p = \infty.
\end{cases}
\]
is finite. Here, $\n{f(x)} := \sqrt{\abs{f_1(x)}^2 + \cdots + \abs{f_k(x)}^2}$ with $f := (f_1, \ldots, f_k)$.

We ask under what conditions on the functions $q_i$ and $s_i$ the solution $\psi$ is an element of $L^p(\sfX, \R^k)$, for $1\leq p < \infty$ and a bounded nonempty set $\sfX\subset\R^m$. 

To this end, note that in order for $\psi$ to be in $L^p(\sfX, \R^k)$, the RB operator $T$ must map $L^p(\sfX, \R^k)$ into itself. Therefore, the functions $q_i$ and $s_i$ must also be in $L^p(\sfX, \R^k)$. Moreover, $s_i$ needs to be in $L^\infty(\sfX, \R^k)$ for the product $s_i\cdot f$ to be in $L^p(\sfX, \R^k)$. Thus, as $\sfX$ is bounded it has finite measure and therefore $s_i\in L^\infty(\sfX, \R^k)$ implies that $s_i\in L^p(\sfX, \R^k)$ for all $1\leq p \leq \infty$.

Now it remains to be shown that the RB operator $T$ is contractive on $L^p(\sfX, \R^k)$. For this purpose, let $f,g\in L^p(\sfX, \R^k)$. Then, with $\sfX_i:= l_i(\sfX)$,
\begin{align*}
\n{Tf - Tg}_p^p &= \int_\sfX \n{Tf (x) - Tg(x)}^p dx\\
&= \int_\sfX \n{\sum_{i=1}^n s_i(l_i^{-1}(x))\cdot (f-g)(l_i^{-1}(x)) \chi_{\sfX_i}(x)}^p dx\\
&\leq \sum_{i=1}^n \int_{\sfX_i} \abs{s_i(l_i^{-1}(x))}^p \n{(f-g)(l_i^{-1}(x))}^p dx\\
&= \sum_{i=1}^n \int_{\sfX} \abs{(l_i^{-1})'(x)} \abs{s_i(x)}^p \n{(f-g)((x)}^p dx.
\end{align*}

If, for all $i\in \N_n$, $\n{l_i^{-1})'}_\infty =: \lambda_i<\infty$ and $\n{s_i}_\infty = :s_i<\infty$, then 
\[
\n{Tf - Tg}_p^p \leq \left(\sum_{i=1}^n \lambda_i s_i^p\right) \n{(f-g)((x)}_p^p,
\]
and $T$ is contractive provided that
\[
\sum_{i=1}^n \lambda_i s_i^p < 1.
\]
Hence, we arrived at the following result
\begin{theorem}\label{th3.2}
The system of functional equations
\[
\psi (l_i (x)) = q_i (x) + s_i (x) \psi (x), \quad\text{on $\sfX\subset\R^m$ and for $i\in \N_n$},
\]
has a unique solution $\psi\in L^p(\sfX, \R^k)$, $1\leq p \leq \infty$, respectively, the RB operator
\[
T f (x) = (q_i\circ l_i^{-1})(x) + (s_i\circ l_i^{-1})(x)\cdot (f\circ l_i^{-1})(x),\quad{x\in\sfX_i},\;i\in \N_n,
\]
a unique fixed point $\psi\in L^p(\sfX, \R^k)$ provided that
\begin{enumerate}
\item $q_i\in L^p(\sfX, \R^k)$, $s_i\in L^\infty (\sfX, \R^k)$ and 
\item $\sum\limits_{i=1}^n \lambda_i s_i^p < 1$, where $\lambda_i = \n{(l_i^{-1})'}_\infty $ and $s_i = \n{s_i}_\infty$.
\end{enumerate}
\end{theorem}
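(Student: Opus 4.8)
The plan is to follow exactly the template of Theorem~\ref{sol}: realize the RB operator $T$ as a contraction on the \emph{complete} metric space $L^p(\sfX,\R^k)$ and invoke the Banach Fixed Point Theorem. Thus there are only two points to settle — that $T$ maps $L^p(\sfX,\R^k)$ into itself, and that it is a contraction — after which existence and uniqueness of the fixed point (equivalently, of the solution of the functional system) are automatic, as are the iteration scheme and error estimate recorded in the Remarks.

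For the \emph{self-mapping} property I would write $Tf = T(0) + \sum_{i=1}^n (s_i\circ l_i^{-1})\,(f\circ l_i^{-1})\,\chi_{\sfX_i}$ as in \eqref{eq3.17}. Under hypothesis~(1) one has $q_i\in L^p(\sfX,\R^k)$ and $s_i\in L^\infty(\sfX,\R^k)$; since $\sfX$ is bounded, hence of finite Lebesgue measure, $s_i\in L^\infty$ also forces $s_i\in L^p$ for every $p$. Precomposition with the (a.e.\ defined, bi-Lipschitz) maps $l_i^{-1}:\sfX_i\to\sfX$ preserves $L^p$-membership — this is where finiteness of $\lambda_i=\n{(l_i^{-1})'}_\infty$ enters, via the change-of-variables formula, whose Jacobian weight is bounded — so that $T(0)=\sum_i (q_i\circ l_i^{-1})\chi_{\sfX_i}$ is a finite sum of $L^p$ functions, while for $f\in L^p$ each term $(s_i\circ l_i^{-1})(f\circ l_i^{-1})\chi_{\sfX_i}$ is an $L^\infty$ function times an $L^p$ function, hence in $L^p$. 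Therefore $Tf\in L^p(\sfX,\R^k)$.

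For the \emph{contraction} estimate in the range $1\le p<\infty$ one carries out precisely the displayed computation preceding the statement: by the disjointness \eqref{c1} and the covering \eqref{c2}, $\n{Tf-Tg}_p^p=\sum_{i=1}^n\int_{\sfX_i}\abs{s_i\circ l_i^{-1}}^p\,\n{(f-g)\circ l_i^{-1}}^p\,dx$; pulling out $\n{s_i}_\infty^p=:s_i^p$ and substituting $x\mapsto l_i^{-1}(x)$ produces the Jacobian weight, whose essential supremum is the constant $\lambda_i$, so that
\[
\n{Tf-Tg}_p^p\;\le\;\Big(\sum_{i=1}^n \lambda_i\,s_i^p\Big)\,\n{f-g}_p^p .
\]
Hypothesis~(2) makes the factor in parentheses strictly less than $1$, so $T$ is a contraction on $L^p(\sfX,\R^k)$ and the Banach Fixed Point Theorem yields the unique fixed point $\psi\in L^p(\sfX,\R^k)$, which is the asserted solution. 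The endpoint case $p=\infty$ is not covered by this integral argument and I would instead treat it exactly as in Theorem~\ref{sol}: restricting that proof to $L^\infty(\sfX,\R^k)$ gives $\n{Tf-Tg}_\infty\le(\max_i s_i)\,\n{f-g}_\infty$, so the relevant requirement is $\max_i s_i<1$, which is the natural reading of condition~(2) in the limiting case.

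The main obstacle is the measure-theoretic bookkeeping surrounding the change of variables: one must check that each $l_i^{-1}$ is defined almost everywhere on $\sfX_i$ and absolutely continuous (so the substitution formula is legitimate), correctly identify the Jacobian weight of the substitution and absorb its essential supremum into $\lambda_i$ — for an affine $l_i(x)=A_ix+b_i$ it is simply the constant $\abs{\det A_i}$, which is less than $1$ for a contraction — and observe that possible failures of \eqref{c1}--\eqref{c2} on sets of measure zero (overlaps or gaps of null measure) are harmless once everything sits under an integral. For affine $l_i$ all of this is transparent; the care is needed only if one wants the statement for general bi-Lipschitz $l_i$.
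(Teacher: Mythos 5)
Your proposal is correct and follows essentially the same route as the paper, which establishes the theorem by the displayed computation preceding its statement: the self-mapping property from $q_i\in L^p$, $s_i\in L^\infty$, and the boundedness of $\sfX$, followed by the change-of-variables contraction estimate with Jacobian weight bounded by $\lambda_i$ and an appeal to the Banach Fixed Point Theorem. Your explicit treatment of the endpoint $p=\infty$ via the sup-norm argument of Theorem~\ref{sol} is a sensible supplement, since the paper's integral computation only covers $1\le p<\infty$.
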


\begin{remark}
In a similar fashion, one can derive conditions such that the unique solutions/fixed points $\psi$ are elements of H\"older or Sobolev spaces. See, for instance, \cite{PRM,massopust,massopust1}.
\end{remark}
\subsection{Continuous Solutions}
So far, we only considered bounded solution/fixed points $\psi$ for \eqref{psieq} and \eqref{eq3.17}. However, in some instances, a continuous or even differentiable solution is required. We only present a result for continuous $\psi$ and make some remarks about how to obtain differential solutions. 

\begin{theorem}\label{th3.3}
The system of functional equations \eqref{psieq} has a unique continuous solution $\psi: \sfX\to\sfF$ provided that
\begin{enumerate}
\item $\sfX = \bigcup\limits_{i=1}^n l_i(\sfX)$,
\item the functions $l_i$, $q_i$, and $s_i$ are continuous,
\item and for all $i,j\in \N_n$ and $x_1, x_2\in X$:
\be\label{joinupc}
\lim_{x\to x_1} f_j(x) = f_i(x_2)\;\;\Longrightarrow\;\; \lim_{x\to x_1} q_j(x) + s_j(x) \psi(x) = q_i(x_2) + s_i(x_2) \psi(x_2).
\ee
\end{enumerate} 
\end{theorem}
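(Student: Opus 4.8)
The plan is to read the result off Theorem~\ref{sol} rather than to build a fresh fixed-point argument. The contractivity hypothesis $s:=\max_{i\in\N_n}\sup_{\sfX}\abs{s_i}<1$ of Theorem~\ref{sol} must stay in force (it is tacitly inherited from the earlier set-up; equivalently it should be added to the list of conditions), and then Theorem~\ref{sol} already produces a \emph{unique} bounded $\psi$ with $\psi=T\psi$. On a compact $\sfX$ -- the natural ambient hypothesis for a continuous theory, mirroring $\sfX=[x_0,x_N]$ in the classical case -- a continuous solution is automatically bounded, so uniqueness among continuous solutions is inherited, and the entire content of the theorem reduces to the single claim that \emph{this $\psi$ is continuous}. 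Conditions (2) and (3) exist precisely to force this; in particular \eqref{joinupc} is best read as a compatibility condition the fixed point is assumed to satisfy (reading $l_i$ for $f_i$ in that display).

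To verify continuity of $\psi$ at an arbitrary $y_0\in\sfX$ I would argue along subsequences. Fix $y_0$ and $y_m\to y_0$ in $\sfX$; it suffices that every subsequence admit a further subsequence along which $\psi(y_m)\to\psi(y_0)$. By pigeonholing the index we may assume $y_m\in\sfX_j=l_j(\sfX)$ for a single fixed $j$, and by compactness of $\sfX$ that $z_m:=l_j^{-1}(y_m)\to z_0\in\sfX$; continuity and injectivity of $l_j$ then force $y_0=l_j(z_0)$. Pick also $i_0$ and $x_0\in\sfX$ with $y_0=l_{i_0}(x_0)$, which exist by hypothesis~(1). Now \eqref{psieq} gives $\psi(y_m)=q_j(z_m)+s_j(z_m)\psi(z_m)$ and $\psi(y_0)=q_{i_0}(x_0)+s_{i_0}(x_0)\psi(x_0)$, while $\lim_{z\to z_0}l_j(z)=l_j(z_0)=l_{i_0}(x_0)$ triggers \eqref{joinupc} with $x_1=z_0$ and $x_2=x_0$, yielding $\lim_{z\to z_0}\bigl(q_j(z)+s_j(z)\psi(z)\bigr)=q_{i_0}(x_0)+s_{i_0}(x_0)\psi(x_0)=\psi(y_0)$. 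Feeding the sequence $z_m\to z_0$ into this limit relation gives $\psi(y_m)\to\psi(y_0)$, as required.

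Two structural points. At a $y_0$ in the relative interior of a single $\sfX_i$ the claim is just the pasting lemma applied to the continuous pieces $(q_i+s_i\psi)\circ l_i^{-1}$, so \eqref{joinupc} is really only doing work at the seams $\overline{\sfX_i}\cap\overline{\sfX_j}$; the argument above is phrased uniformly to avoid a case split. If one instead prefers the Banach-fixed-point packaging, the required contraction estimate is verbatim that of Theorem~\ref{sol}; but one cannot run that theorem on all of the space $C_b(\sfX,\sfF)$ of bounded continuous functions, because \eqref{joinupc} only constrains the seam behaviour of $\psi$ itself and not of an arbitrary $f$ -- the classical remedy is to iterate on the closed affine subset of continuous functions with prescribed junction values, which \eqref{joinupc} renders $T$-invariant; this is an equivalent repackaging of the above.

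I expect the only real obstacle to be choosing and using the right topology on $\sfX$: compactness is what gives $(l_j^{-1}(y_m))_m$ a limit point inside $\sfX$, and closedness is what lets the limit point $x_1$ in \eqref{joinupc} genuinely lie in $\sfX$ (Example~\ref{ex1} uses the half-open $[0,1)$, where exactly these endpoints need separate care, and where one should really work with the one-point-longer interval). One may also want continuity of the inverse branches $l_i^{-1}$ for the pasting-lemma half. Modulo those choices every step is routine; the load-bearing idea is simply that \eqref{psieq} re-expresses $\psi$ near $y_0$ in terms of $\psi$ near $l_j^{-1}(y_0)$, and \eqref{joinupc} supplies exactly the one-sided limit making that propagation continuous across the seams.
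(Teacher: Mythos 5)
Your argument is sound, but be aware that the paper does not actually prove Theorem~\ref{th3.3}: its ``proof'' consists of a pointer to \cite{SB} (functional-equation setting) and \cite{massopust} (RB-operator setting). Your write-up in effect supplies both of those routes in compressed form. The main subsequence argument is the functional-equation proof: use Theorem~\ref{sol} to get the unique bounded fixed point, then propagate continuity across the seams by rewriting $\psi(y_m)$ via \eqref{psieq} in terms of $\psi$ near $z_0=l_j^{-1}(y_0)$ and letting \eqref{joinupc} (read as a hypothesis on the already-constructed $\psi$, with $l$ in place of the misprinted $f$) close the loop; your closing remark about running Banach's theorem instead on the closed affine subset of continuous functions with prescribed junction values is precisely the RB-operator proof of \cite{massopust}. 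Your housekeeping observations are also correct and worth recording: the contraction condition $s<1$ from Theorem~\ref{sol} is tacitly assumed but absent from the statement, and some compactness (or at least closedness) of $\sfX$ is genuinely needed so that the limit point $z_0$ of $l_j^{-1}(y_m)$ lies in $\sfX$ and can serve as the $x_1$ of \eqref{joinupc} --- as stated, with $\sfX$ merely a bounded subset of a normed space, the theorem is under-hypothesized, which your proof makes visible. The only inessential loose end is the pasting-lemma aside: since the subsequence argument already treats every $y_0\in\sfX$ uniformly, you do not need it, and it would anyway require continuity of the inverse branches $l_i^{-1}$, which does not follow from injectivity and continuity of $l_i$ alone (though it holds for the affine maps in all of the paper's examples).
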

\begin{proof}
We refer the interested reader to \cite{SB} or \cite{massopust}. In the former reference, the proof follows the functional equation setting and in the latter the RB operator setting.
\end{proof}

\begin{example}

\end{example}
We connect up with the previous Example 1 but choose as $\sfX := [0,1]$. We modify the functions $q_i$ to be $q_1(x) := x$ and $q_2(x) := 1-x$. but keep $s_1$ and $s_2$ unchanged. Note that here we have $\sfX_1 \cap \sfX_2 = \{\frac13\}$ and we need to ensure that conditions \eqref{wd} are satisfied. In particular, as we have $l_1(1) = \frac13 = l_2(0)$, the following equality has to hold:
\be\label{3.7}
q_1(1) + s_1(1)\psi(1) = q_2(0) + s_2(0)\psi(0).
\ee
The functional equations \eqref{psieq} imply for $x\in \{0,1\}$
\[
\psi(0) = q_1 (0) + s_1(0) \psi(0)\quad\text{and}\quad \psi(1) = q_2 (1) + s_2(1)\psi(0),
\]
which gives the values of $\psi$ at the endpoints of $\sfX$:
\[
\psi(0) = \frac{q_1(0)}{1-s_1(0)}\quad\text{and}\quad \psi(1) = \frac{q_2(1)}{1-s_2(1)}.
\]
The validity of \eqref{3.7} guarantees the existence of a \emph{bounded} solution $\psi$ (since $s = \frac23 < 1$). As $q_1(0) = 0 = q_2(1)$, the solution $\psi$ also vanishes on the boundary of $\sfX$: $\psi(0) = 0 = \psi(1)$.

In order to obtain a \emph{continuous} solution, equations \eqref {joinup} must be satisfied. In our current setting, as all the functions involved are continuous on $\sfX$, we obtain
\[
\lim_{x\to 0-} q_2(x) + s_2(x)\psi(x) = q_1(1) + s_1(1)\psi(1),
\]
which is identical to \eqref{3.7}. 

The solution/fixed point $\psi$ is therefore continuous. The graph of $\psi$ is depicted in Figure \ref{fig2}.
\begin{figure}[h!]
\begin{center}
\includegraphics[width=7cm, height= 4cm]{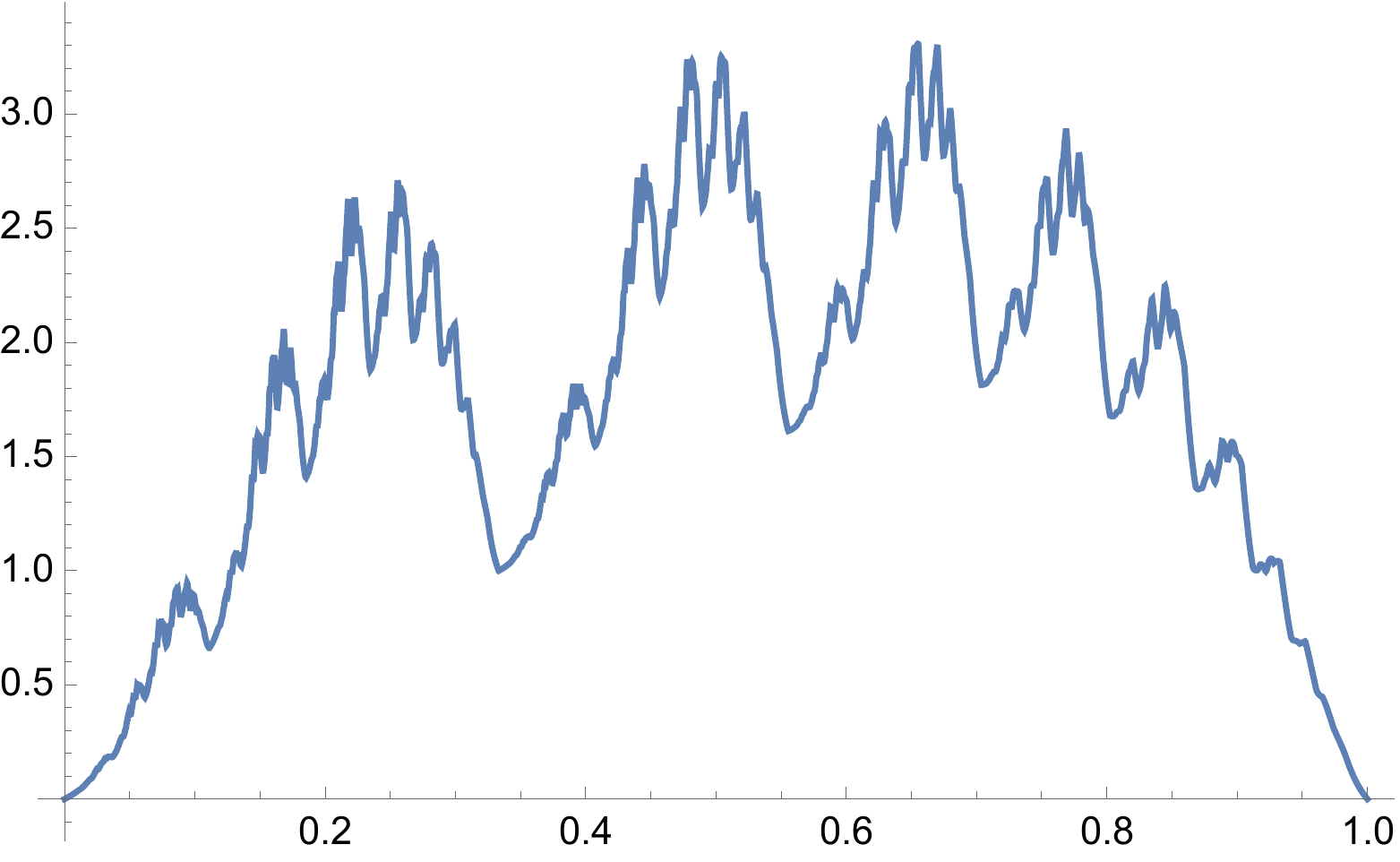}
\caption{A continuous solution/fixed point $\psi$.}\label{fig2}
\end{center}
\end{figure}

In \cite{SB}, a similar setting is considered.
\begin{remark}
For many applications, in particular the setting where $\sfE := \R$, the partition $(\sfX_i : i\in \N_n)$ is induced by a given finite set of data points $\{(x_i, y_i)\in \R\times\sfF : x_0 < x_1 < \cdots < x_n\}$ with $\sfX_i := [x_{i-1},x_i]$. The injective contractions $l_i$ are then affine mappings $[x_0,x_n]\to [x_{i-1},x_i]$. For $\sfF := \R$, this type of continuous fractal interpolation was first introduced in \cite{B2} and the fixed point $\psi$ was termed a \emph{fractal interpolation function} as it was also required that $y_j = \psi(x_j)$, $j\in\{0,1,\ldots, n\}$.
\end{remark}
\section{Local Fractal Interpolation}
In this section, we introduce a generalization of global fractal interpolation. Previously, we considered a fixed nonempty bounded subset $\sfX$ of a normed space $\sfE$ generating a partition of $\sfX$ via a finite family of injective contractions. Now, we replace the single subset $\sfX$ by a finite family of subsets $\sfX_i$ of $\sfX$.

More precisely, let $\{\sfX_i : i \in \N_n\}$ be a family of nonempty subsets of a fixed subset $\sfX$ of a normed space $\sfE$. Suppose $\{l_i\}_{i = 1}^{n}$ is a collections of injective mappings from $\sfX_i\to \sfX$ generating a partition of $\sfX$ in the sense that
\begin{align}
&\forall\;i, j\in \N_n, i\neq j: l_i(\sfX_i)\cap l_j(\sfX_j) = \emptyset;\label{cl1}\\
&\sfX = \bigcup_{i=1}^n l_i(\sfX_i).\label{cl2}
\end{align}
Note that the $l_i$ need not be contractive mappings here. 
\begin{remark}
One can actually have $m<n$ subsets of $\sfX$ and still be able to define $n$ injective mappings satisfying \eqref{cl1} and \eqref{cl2}. In this case some of the injections $l_i$ share the same domain $X_j$, i.e., $\sfX_j$ is repeated a certain number of times $\sfX_j = \sfX_{j+1} = \cdots = \sfX_{j+r}$. This situation occurs in Section \ref{sect4.3}.
\end{remark}
Local fractal interpolation looks for local solutions $\psi: \sfX = \bigcup\limits_{i\in \N_n} l_i (\sfX_i)\to \sfF$ of functional equations or for fixed points of RB operators of the form
\be\label{localpsi}
\psi (l_i (x)) = q_i (x) + s_i (x) \psi (x), \quad\text{$x\in\sfX_i$,\;\;$i\in \N_n$},
\ee
respectively,
\be\label{Tlocal}
T f (x) = (q_i\circ l_i^{-1})(x) + (s_i\circ l_i^{-1})(x)\cdot (f_i\circ l_i^{-1})(x), \quad\text{$x\in l_i(\sfX_i)$,\;\; $i\in \N_n$},
\ee 
where $f_i := f\vert_{\sfX_i}$, on appropriate function spaces.
\subsection{Bounded Local Solutions.}
The extension of the results presented in Section \ref{GFI} carry over to the setting of local fractal interpolation. However, special care must be taken when considering the domains of the functions involved. 

Besides the function space $\cB (\sfX,\sfF)$ already introduced in the previous section, we also need the local version of this space, namely, $\cB (\sfX_i,\sfF)$, $i\in \N_n$. To this end and in view of the result below, we now assume that
\begin{enumerate}
\item $s_i\in \cB(\sfX_i,\R)$ and
\item $q_i\in \cB(\sfX_i,\sfF)$.
\end{enumerate}
Then the RB operator $T$ defined in \eqref{Tlocal} maps $\cB (\sfX,\sfF)$ into itself. Hence, we arrive at the local version of Theorem \ref{sol}.
\begin{theorem}
The system of functional equations \eqref{localpsi} has a unique bounded solution $\psi: \sfX\to\sfF$, respectively, the RB operator \eqref{Tlocal} has a unique bounded fixed point $\psi: \sfX\to\sfF$ provided that
\begin{enumerate}
\item conditions \eqref{cl1} and \eqref{cl2} are satisfied, and
\item $s:= \max\limits_{i\in \N_n} \sup\limits_{x\in \sfX_i} |s_i(x)| < 1$.
\end{enumerate} 
\end{theorem}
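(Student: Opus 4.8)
The plan is to follow verbatim the strategy used for Theorem~\ref{sol}: realize \eqref{localpsi} as the fixed-point equation $T\psi=\psi$ for the RB operator \eqref{Tlocal} on the Banach space $\cB(\sfX,\sfF)$, verify that $T$ is a well-defined contraction there, and invoke the Banach Fixed Point Theorem. The only genuinely new bookkeeping is that the domains of the $l_i$, $q_i$, $s_i$ are now the (possibly distinct) subsets $\sfX_i\subseteq\sfX$ rather than $\sfX$ itself, so I would first record why $T$ still maps $\cB(\sfX,\sfF)$ into itself. Since each $l_i$ is injective, $l_i^{-1}$ is well defined on $l_i(\sfX_i)$; since the sets $l_i(\sfX_i)$ are pairwise disjoint by \eqref{cl1} and cover $\sfX$ by \eqref{cl2}, the piecewise prescription \eqref{Tlocal} assigns to $Tf$ exactly one value at every point of $\sfX$, with no compatibility constraint required; and since $s_i\in\cB(\sfX_i,\R)$, $q_i\in\cB(\sfX_i,\sfF)$, and $f_i=f|_{\sfX_i}$ is bounded whenever $f$ is, one gets $\n{Tf(x)}_\sfF\le\n{q_i}_\infty+\n{s_i}_\infty\,\n{f}$ on $l_i(\sfX_i)$, so $Tf$ is bounded on $\sfX$. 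Note that contractivity of the $l_i$ plays no role in any of this, which is why it is not assumed. Finally, $\cB(\sfX,\sfF)$ is complete because $\sfF$ is a Banach space.

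Next I would establish the contraction estimate. For $f,g\in\cB(\sfX,\sfF)$ and $x\in l_i(\sfX_i)$ the affine parts $q_i\circ l_i^{-1}$ cancel, leaving $Tf(x)-Tg(x)=(s_i\circ l_i^{-1})(x)\cdot\big((f-g)\circ l_i^{-1}\big)(x)$. Taking the supremum over $x\in l_i(\sfX_i)$ and substituting $\xi=l_i^{-1}(x)$, which ranges over all of $\sfX_i$ since $l_i\colon\sfX_i\to l_i(\sfX_i)$ is a bijection, gives $\sup_{x\in l_i(\sfX_i)}\n{Tf(x)-Tg(x)}_\sfF=\sup_{\xi\in\sfX_i}|s_i(\xi)|\,\n{(f-g)(\xi)}_\sfF\le\big(\sup_{\xi\in\sfX_i}|s_i(\xi)|\big)\,\n{f-g}$, where the last step uses $\sfX_i\subseteq\sfX$ so that $\n{(f-g)(\xi)}_\sfF\le\n{f-g}$ for all $\xi\in\sfX_i$. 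Because $\sfX=\bigcup_{i=1}^n l_i(\sfX_i)$, taking the maximum over $i\in\N_n$ yields $\n{Tf-Tg}\le s\,\n{f-g}$ with $s=\max_{i\in\N_n}\sup_{x\in\sfX_i}|s_i(x)|$.

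With hypothesis (2), $s<1$, so $T$ is a contraction on the Banach space $\cB(\sfX,\sfF)$, and the Banach Fixed Point Theorem delivers a unique $\psi\in\cB(\sfX,\sfF)$ with $T\psi=\psi$. To conclude, I would note, exactly as at the start of the proof of Theorem~\ref{sol}, that injectivity of the $l_i$ together with \eqref{cl1}--\eqref{cl2} makes $T\psi=\psi$ equivalent, point by point, to the system \eqref{localpsi}: evaluating $T\psi=\psi$ at $l_i(x)$ for $x\in\sfX_i$ recovers \eqref{localpsi}, and conversely. Hence $\psi$ is the unique bounded solution of \eqref{localpsi}. I do not anticipate a serious obstacle; the one place worth care is checking that \eqref{cl1} is precisely what makes $T$ single-valued — without disjointness one would be forced, as in \eqref{wd}, to impose a compatibility condition at points of $l_i(\sfX_i)\cap l_j(\sfX_j)$, and the clean statement above would fail.
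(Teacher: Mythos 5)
Your proposal is correct and is precisely the argument the paper intends: the paper's proof of this theorem simply says to repeat the proof of Theorem~\ref{sol} with $\sfX_i$ and $\sfX$ there replaced by $l_i(\sfX_i)$ and $\sfX_i$ here, and your write-up fills in exactly those details (well-definedness of $T$ from \eqref{cl1}--\eqref{cl2}, the substitution $\xi=l_i^{-1}(x)$ over $\sfX_i$, and the contraction constant $s$). No gaps.
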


\begin{proof}
The proof follows the same arguments as those given in the proof of Theorem \ref{sol}; replace $\sfX_i$ and $\sfX$ there by $l_i(\sfX_i)$ and $\sfX_i$ here. We leave the details to the reader who may also consult \cite{SB}.
\end{proof}

As in the global setting of fractal interpolation, a few remarks are in order.
\begin{remarks}\hfill
\begin{enumerate}
\item The solution/fixed point $\psi$ is referred to as a \emph{bounded local fractal function}.
\item The function $\psi$ depends on $n$, the family of subsets $\{\sfX_i : i\in\N_n\}$, the partition $(l_i(\sfX) : i\in\N_n)$ induced by the injective mappings $l_i$, and the now locally defined functions $s_i$ and $q_i$.
\end{enumerate}
\end{remarks}

As observed above, conditions \eqref{cl1} and \eqref{cl2} cannot be relaxed without adding some compatibility conditions to guarantee that the RB operator $T$ has the form given by Eqn.~ \eqref{Tlocal}. Should Eqn. ~\eqref{cl1} not be satisfied, one would have to impose in our current setting the following, now local, compatibility conditions:
\begin{align}
\forall i,j\in &\N_n\,\forall x_1\in\sfX_i\,\forall x_2\in\sfX_j:\nonumber\\
& \; l_i (x_1) = l_j (x_2)\;\;\Longrightarrow\;\; q_i(x_1) + s_i (x_1) \psi (x_1) = q_j(x_2) + s_ j(x_2) \psi (x_2).
\end{align}
We again refer to \cite{SB,SB1} for more details regarding this issue.
\subsection{$L^p$ Local Solutions}
As an application of local fractal functions for a particular setting, we again consider local fractal functions in $L^p$ spaces for $p\in[0,\infty]$.

For this purpose, we choose the following set up. Let $\sfE := \R := \sfY$ with the canonical Euclidean norm and let $\sfX:= [0,1]$. Suppose that we are given a partition of $\sfX$ of the form $\Delta := (0 =: x_0 < x_1 < \cdots < x_{n-1} < x_n := 1)$, for some integer $n>1$. Furthermore, suppose that $\{\sfX_i \st i \in \N_n\}$ is a family of half-open intervals of $[0,1]$. 

We define affine mappings $l_i:\sfX_i$ onto $[x_{i-1}, x_i)$, $i = 1, \ldots, n-1$, and from $\sfX_n^+ := \sfX_n\cup l_n^{-1}(1-)$ onto $[x_{n-1},x_n]$, where $l_n$ maps $\sfX_n$ onto $[x_{n-1}, x_n)$.

We have the following result for RB-operators defined on the Lebesgue spaces $L^p[0,1]$, $1\leq p \leq \infty$. (See, also, \cite{bhm}.)

\begin{theorem}\label{th4.2}
Assume that $q_i\in L^p (\sfX_i, [0,1])$ and $s_i\in L^\infty (\sfX_i, \R)$, $i \in\N_n$. 
The system of functional equations
\[
\psi (l_i (x)) = q_i (x) + s_i (x) \psi (x), \quad x\in [0,1],\; i\in \N_n,
\]
has a unique solution $\psi\in L^p[0,1]$, $1\leq p \leq \infty$, respectively, the RB operator
\be\label{Top}
T f (x) = (q_i\circ l_i^{-1})(x) + (s_i\circ l_i^{-1})(x)\cdot (f_i\circ l_i^{-1})(x),\quad{x\in\sfX_i},\;i\in \N_n,
\ee
%
has a unique fixed point $\psi\in L^p(\Omega, \R^k)$, where $f_i = f\vert_{\X_i}$, provided that
\be\label{condition}
\begin{cases}
\left(\displaystyle{\sum_{i=1}^n}\, a_i \,\|s_i\|_{\infty, \sfX_i}^p\right)^{1/p} < 1, & p\in[1,\infty);\\ 
\max\limits_{i\in\N_n}\|s_i\|_{\infty,\sfX_i} < 1, & p = \infty,
\end{cases}
\ee
where $a_i$ denotes the Lipschitz constant of $(l_i^{-1})'$. Here, we wrote $\|s_i\|_{\infty,\sfX_i}$ for $\sup\limits_{x\in \sfX_i} |s_i(x)|$.
\end{theorem}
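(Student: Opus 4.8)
The plan is to mimic the proof of Theorem \ref{th3.2}, the only new feature being that the RB operator acts on functions defined on all of $\sfX=[0,1]$ but is assembled from restrictions $f_i = f\vert_{\sfX_i}$, so one must check both that $T$ maps $L^p[0,1]$ into itself and that it is a contraction, keeping careful track of the local domains. First I would verify $T(L^p[0,1])\subseteq L^p[0,1]$: for $x\in l_i(\sfX_i)$ one has $Tf(x) = (q_i\circ l_i^{-1})(x) + (s_i\circ l_i^{-1})(x)\,(f_i\circ l_i^{-1})(x)$, and since the $l_i(\sfX_i)$ partition $[0,1]$ by \eqref{cl1}--\eqref{cl2}, it suffices to estimate each piece. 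The change of variables $y = l_i^{-1}(x)$ (valid since $l_i$ is affine and injective, so $l_i^{-1}$ is affine with constant derivative) gives $\int_{l_i(\sfX_i)} \n{(q_i\circ l_i^{-1})(x)}^p\,dx = \abs{(l_i^{-1})'}\int_{\sfX_i}\n{q_i(y)}^p\,dy < \infty$ because $q_i\in L^p(\sfX_i)$; similarly the product term is controlled by $\n{s_i}_{\infty,\sfX_i}^p$ times $\n{f}_p^p$ since $s_i\in L^\infty$. Summing over $i\in\N_n$ shows $Tf\in L^p[0,1]$.

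Next I would establish contractivity. For $f,g\in L^p[0,1]$, since the sets $l_i(\sfX_i)$ are disjoint and cover $[0,1]$,
\[
\n{Tf-Tg}_p^p = \sum_{i=1}^n \int_{l_i(\sfX_i)} \abs{(s_i\circ l_i^{-1})(x)}^p\,\n{(f-g)(l_i^{-1}(x))}^p\,dx.
\]
Applying the affine change of variables $y=l_i^{-1}(x)$ in each summand converts this to $\sum_{i=1}^n \abs{(l_i^{-1})'}\int_{\sfX_i}\abs{s_i(y)}^p\n{(f-g)(y)}^p\,dy$. Bounding $\abs{s_i(y)}\leq \n{s_i}_{\infty,\sfX_i}$ and then extending the integral over $\sfX_i$ to an integral over $[0,1]$ (the integrand being nonnegative) yields $\n{Tf-Tg}_p^p \leq \bigl(\sum_{i=1}^n \abs{(l_i^{-1})'}\,\n{s_i}_{\infty,\sfX_i}^p\bigr)\n{f-g}_p^p$; here $\abs{(l_i^{-1})'}$ is the constant value of the derivative, which for an affine map coincides with the Lipschitz constant $a_i$ appearing in \eqref{condition}. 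Hypothesis \eqref{condition} for $p\in[1,\infty)$ then makes the bracketed constant strictly less than $1$, so $T$ is a contraction on the Banach space $L^p[0,1]$, and the Banach Fixed Point Theorem supplies the unique fixed point $\psi$, which by the equivalence of \eqref{localpsi} and \eqref{Tlocal} (using injectivity of the $l_i$) is the unique $L^p$ solution of the functional equations. For $p=\infty$ the argument is the analogue of Theorem \ref{sol}: $\sup_{x\in[0,1]}\n{Tf(x)-Tg(x)} = \max_{i}\sup_{y\in\sfX_i}\abs{s_i(y)}\n{(f-g)(y)}\leq (\max_i \n{s_i}_{\infty,\sfX_i})\n{f-g}_\infty$, which is a contraction under the second case of \eqref{condition}.

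The only genuinely delicate point — and the one I would treat most carefully — is the bookkeeping at the right endpoint: the map $l_n$ is declared to send $\sfX_n^+ = \sfX_n\cup l_n^{-1}(1-)$ onto the closed interval $[x_{n-1},x_n]$ while the other $l_i$ hit half-open intervals, so I must confirm that $\{l_i(\sfX_i)\}_{i=1}^n$ really does partition $[0,1]$ up to a set of Lebesgue measure zero (the single extra point contributes nothing to any $L^p$ integral), which is exactly why the change-of-variables computation above goes through unchanged. I expect no other obstacle; the result is essentially Theorem \ref{th3.2} transported to the local framework, with $\n{(l_i^{-1})'}_\infty$ replaced by the (constant, hence Lipschitz-constant-equal) derivative $a_i$ of the affine $l_i^{-1}$ and with all suprema and integrals taken over the local domains $\sfX_i$ rather than over a common $\sfX$.
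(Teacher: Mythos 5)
Your proof is correct and follows essentially the same route as the paper's: decompose $\n{Tf-Tg}_p^p$ over the disjoint pieces $l_i(\sfX_i)$, apply the affine change of variables $y=l_i^{-1}(x)$ with Jacobian $a_i$, bound $|s_i|$ by $\n{s_i}_{\infty,\sfX_i}$, and enlarge each local integral to all of $[0,1]$ to obtain the contraction constant $\sum_i a_i\n{s_i}_{\infty,\sfX_i}^p$ (respectively the max for $p=\infty$), then invoke the Banach Fixed Point Theorem. Your added remarks on well-definedness of $T$ on $L^p[0,1]$ and on the measure-zero endpoint bookkeeping are points the paper asserts without elaboration, but they do not change the argument.
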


\begin{proof}
Note that under the hypotheses on the functions $q_i$ and $s_i$ as well as the mappings $l_i$, $T f$ is well-defined and an element of $L^p[0,1]$. It remains to be shown that under condition \eqref{condition}, $T$ is contractive on $L^p[0,1]
$. 

To this end, let $g,h \in L^p [0,1]$ and let $p\in [0,\infty)$. Then
\begin{align*}
\|T g - T h\|^p_{p} & = \int\limits_{[0,1]} |T g (x) - T h (x)|^p dx\\
& = \int\limits_{[0,1]} \left|\sum_{i=1}^{n} (s_i\circ l_i^{-1})(x) [(g_i\circ l_i^{-1})(x) - (h_i\circ l_i^{-1})(x)]\,\chi_{l_i(\X_i)}(x)\right|^p\, dx\\
& = \sum_{i=1}^{n}\,\int\limits_{[x_{i-1},x_i]}\left| (s_i\circ l_i^{-1})(x) [(g_i\circ l_i^{-1})(x) - (h_i\circ l_i^{-1})(x)]\right|^p\,dx\\
& = \sum_{i=1}^{n}\,a_i\,\int\limits_{\X_i} \left| s_i (x) [g_i(x)- h_i(x)]\right|^p\,dx\\
&  \leq \sum_{i=1}^{n}\,a_i\,\|s_i\|^p_{\infty, \X_i}\,\int\limits_{\X_i} \left| g_i(x) - h_i(x)\right|^p\,dx = \sum_{i=1}^{n}\,a_i\,\|s_i\|^p_{\infty, \X_i}\,\|f_i - g_i\|^p_{p,\X_i}\\
& = \sum_{i=1}^{n}\,a_i\,\|s_i\|^p_{\infty, \X_i}\,\|g_i - h_i\|^p_{p} \leq \left(\sum_{i=1}^{n}\,a_i\,\|s_i\|^p_{\infty, \X_i}\right) \|g - h\|^p_{p}.
\end{align*}
Now let $p= \infty$. Then,
\begin{align*}
\|T g - T h\|_{\infty} & = \left\|\sum_{i=1}^{n} (s_i\circ l_i^{-1})(x) [(g_i\circ l_i^{-1})(x) - (h_i\circ l_i^{-1})(x)]\,\chi_{l_i(\X_i)}(x)\right\|_\infty\\
& \leq \max_{i\in\N_n}\,\left\| (s_i\circ l_i^{-1})(x) [(g_i\circ l_i^{-1})(x) - (h_i\circ l_i^{-1})(x)]\right\|_{\infty,\X_i}\\
& \leq \max_{i\in\N_n}\|s_i\|_{\infty,\X_i} \left\|g_i - h_i]\right\|_{\infty,\X_i} = \max_{i\in\N_n}\|s_i\|_{\infty,\X_i} \left\|g_i - h_i]\right\|_{\infty}\\
&  \leq \left(\max_{{i\in\N_n}}\,\|s_i\|_{\infty,\X_i}\right) \left\|g - h]\right\|_{\infty}
\end{align*}
These calculations prove the claims.
\end{proof}
\begin{remark}
Although conditions \eqref{condition} resembles those presented in Theorem \ref{th3.2}, they are -- because of the local nature of the estimates -- more subtle than the former ones.
\end{remark}
\subsection{Continuous Local Solutions}\label{sect4.3}
As in the global case, we can consider continuous local solutions to the given set of functional equations, respectively, an RB operator. Here, we present the analog to Theorem \ref{th3.3} but refer to the literature for the proof. A good reference in the functional equation setting is again \cite{SB}. In the RB operator setting, we refer to \cite{m2}.

\begin{theorem}\label{th4.3}
The system of functional equations \eqref{localpsi} has a unique continuous solution $\psi: \sfX\to\sfF$ provided that
\begin{enumerate}
\item $\sfX = \bigcup\limits_{i=1}^n l_i(\sfX_i)$,
\item the functions $l_i$, $q_i$, and $s_i$ are continuous,
\item and $\forall i,j\in \N_n$, $i\neq j$, $\forall x_1\in\sfX$, $\forall x_2\in \sfX_i$:
\be\label{joinup}
\lim_{\substack{x\to x_1\\ x\in\sfX_j}} f_j(x) = f_i(x_2)\;\;\Longrightarrow\;\; \lim_{\substack{x\to x_1\\ x\in\sfX_j}} q_j(x) + s_j(x) \psi(x) = q_i(x_2) + s_i(x_2) \psi(x_2).
\ee
\end{enumerate} 
\end{theorem}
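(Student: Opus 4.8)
The plan is to exhibit the continuous solution as the fixed point of the RB operator $T$ of \eqref{Tlocal} acting on the \emph{closed} subspace of continuous functions, so that the real work reduces to checking that $T$ preserves continuity; this last point is exactly what the join-up conditions \eqref{joinup} encode. First I would fix the function space: let $\cC(\sfX,\sfF)$ denote the bounded continuous functions $\sfX\to\sfF$ with the supremum norm. Since a uniform limit of continuous functions is continuous, $\cC(\sfX,\sfF)$ is a closed linear subspace of the Banach space $\cB(\sfX,\sfF)$, hence itself complete. Under the standing contractivity hypothesis $s:=\max_{i\in\N_n}\sup_{\sfX_i}|s_i|<1$ (which I take to be in force here, as in the bounded local case — for continuous $s_i$ on bounded cells this is the natural assumption), the local version of Theorem \ref{sol} already supplies a unique fixed point $\psi\in\cB(\sfX,\sfF)$ of $T$, and uniqueness among continuous solutions will then be automatic.

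Next I would prove the key claim $T\big(\cC(\sfX,\sfF)\big)\subseteq\cC(\sfX,\sfF)$. Fix $f\in\cC(\sfX,\sfF)$. On the relative interior of each cell $l_i(\sfX_i)$ — that is, at any $y\in\sfX$ all of whose approximating nets eventually lie in $l_i(\sfX_i)$ — the formula $Tf(y)=(q_i\circ l_i^{-1})(y)+(s_i\circ l_i^{-1})(y)\,(f\circ l_i^{-1})(y)$ is continuous, because $q_i$, $s_i$, $f$ are continuous and each injective continuous $l_i$ is (in the relevant interval setting) a homeomorphism onto its image, so $l_i^{-1}$ is continuous. The only points requiring care are the seams: a point $y=l_i(x_2)$, $x_2\in\sfX_i$, that is also a limit of $l_j(x)$ along a net $x\to x_1$ inside some $\sfX_j$ with $j\neq i$ (or $j=i$, when a domain is repeated as in the remark preceding this subsection). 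Continuity of $Tf$ at $y$ along such a net amounts to
\[
\lim_{\substack{x\to x_1\\ x\in\sfX_j}}\big(q_j(x)+s_j(x)f(x)\big)=q_i(x_2)+s_i(x_2)f(x_2),
\]
which is precisely the implication \eqref{joinup} read for the function $f$.

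To close the argument I would run the Banach iteration $\psi_k:=T\psi_{k-1}$ from a continuous $\psi_0\in\cC(\sfX,\sfF)$; each $\psi_k$ is continuous by the claim, and $\psi_k\to\psi$ uniformly by the contraction, so $\psi\in\cC(\sfX,\sfF)$. Alternatively, once the bounded fixed point $\psi$ is known to exist one uses $T\psi=\psi$ together with \eqref{joinup} \emph{for $\psi$} — which is exactly the hypothesis as stated — and continuity on the interiors of the cells to conclude $\psi$ is continuous on all of $\sfX$.

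The main obstacle is the seam analysis inside the key claim. One must verify that \eqref{joinup} genuinely ranges over every pair (approximating net in $\sfX_j$, target point $l_i(x_2)$) that can arise at a point where two or more images $l_i(\sfX_i)$ abut — including self-overlaps when a domain is repeated, and including the endpoint adjustments (as in the $\sfX_n^+$ device of the $L^p$ subsection) forced by the fact that the cells $l_i(\sfX_i)$ need not be open. A secondary subtlety is that \eqref{joinup} is phrased in terms of the unknown $\psi$ rather than an arbitrary $f$, so one must be careful to either propagate the matching property along the iteration or argue directly from $T\psi=\psi$. For the fully rigorous details in the functional-equation formulation one consults \cite{SB}, and in the RB-operator formulation \cite{m2}.
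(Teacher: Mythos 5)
The paper does not prove Theorem \ref{th4.3} itself but only points to \cite{SB} (functional-equation formulation) and \cite{m2} (RB-operator formulation), and your sketch is exactly the argument those references carry out: restrict the operator \eqref{Tlocal} to the closed subspace $\cC(\sfX,\sfF)\subset\cB(\sfX,\sfF)$, observe that continuity of $Tf$ away from the seams is automatic while continuity at the seams is precisely \eqref{joinup} (reading the typo $f_j$, $f_i$ in the hypothesis of the implication as $l_j$, $l_i$), and invoke the contraction from the bounded local theorem. Your proposal is correct and essentially the intended proof; you also rightly flag the one genuine subtlety, namely that \eqref{joinup} is phrased for the unknown $\psi$ (whose seam values are pinned down by the functional equations themselves, as in the example following Theorem \ref{th3.3}) rather than for an arbitrary iterate, so one must either check that the matching property is preserved along the Banach iteration or work in the affine subspace of continuous functions taking the prescribed seam values.
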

As an application and an example for a continuous local fractal interpolation, we present the following set up which plays an important role in fractal-based numerical analysis as discussed in \cite{bhm}. 

Suppose that $\sfE := \R =: \sfF$ and $\sfX:= [0,1]$. For an even integer $n\in\N$, define subsets
\be\label{Xs}
\sfX_{2j-1} := \sfX_{2j} := \left[\tfrac{2j-2}{n},\tfrac{2j}{n}\right], \quad j \in \{1,\ldots, \tfrac{n}{2}\}
\ee
and affine mappings $l_i: \sfX_i\to [0,1]$ by 
\be\label{ls}
l_{2j-1} (x) := \tfrac{x}{2} + \tfrac{j-1}{n}\quad\text{and}\quad l_{2j} (x) := \tfrac{x}{2} + \tfrac{j}{n}, \quad x\in \sfX_{2j-1} = \sfX_{2j}.
\ee
Note that
\[
l_i (\sfX_i) = \left[\tfrac{i-1}{n},\tfrac{i}{n}\right], \quad i\in\N_n.
\]
Further, let $x_i := l_i(\sfX_i)\cap l_{i+1}(\sfX_{i+1}) = \{\frac{i}{n}\}$, $i\in\N_{n-1}$, $x_0:= 0$, and $x_n := 1$. In the terminology of \cite{SB}, the elements of $\{x_i : i\in \N_{n-1}\}$ are called \emph{contact points}.

We denote the distinct endpoints of the partitioning intervals $\{l_i(\X_{i})\}$ by $\{x_0 < x_1 <\ldots < x_{N}\}$ where $x_0 = 0$ and $x_N = 1$ and refer to them as {knots}.

Furthermore, we assume that we are given interpolation values at the endpoints of the intervals $\X_{2j-1} = \X_{2j}$:
\be\label{Delta}
\Delta := \left\{(x_{2j}, y_j)\st j = 0, 1, \ldots, n/2\right\}.
\ee
Let 
$$
\cC_{\Delta}(\sfX) := \{f\in \cC(\sfX) : f(x_{2j}) = y_j, \,\forall\, j = 0, 1, \ldots, n/2\}.
$$
Here, $\cC(\sfX) := \cC(\sfX,\R)$ denotes the Banach space of all continuous functions $\sfX\to\R$ endowed with the supremum norm $\n{\cdot}$. Note that $\cC_{\Delta}(\sfX)$ is a closed, hence complete, \emph{metric} subspace of $\cC(\sfX)$ to which we can apply the Banach fixed point theorem. To this end, consider an RB operator $T$ of the form \eqref{Top} acting on $\cC_{\Delta}(\sfX)$. 

In order for $T$ to map $\cC_{\Delta}(\sfX)$ into itself we require that 
\be\label{qsandss}
q_i, s_i \in \cC(\sfX_i) := \cC(\sfX_i,\R) := \{f: \sfX_i\to \R\st \text{$f$ continuous}\}
\ee
and that
\be\label{intcon}
y_{j-1} = (T f) (x_{2(j-1)}) \quad \wedge \quad y_{j} = (T f) (x_{2j}), \quad j = 1, \ldots, n/2,
\ee
where $x_{2j} := \frac{2j}{n}$. 

We remark that the preimages of the knots $x_{2(j-1)}$ and $x_{2j}$ are the endpoints of $\sfX_{2j-1} = \sfX_{2j}$. Substitution of $T$, as given in \eqref{Top}, into \eqref{intcon} and simplification results in
\be\label{intcons}
\begin{split}
q_{2j-1} (x_{2(j-1)}) + \left(s_{2j-1}(x_{2(j-1)}) - 1\right) y_{j-1} & = 0,\\
q_{2j} (x_{2j}) + \left(s_{2j}(x_{2j}) - 1\right) y_{j} & = 0.
\end{split}
\ee

To ensure global continuity of $T f$ on $\sfX=[0,1]$, we also have to impose the following join-up conditions at the oddly indexed knots. (Note that these oddly indexed knots are the images of the midpoints of the intervals $\sfX_{2j-1} = \sfX_{2j}$.)
\be\label{prejoin}
(T f) (x_{2j-1}-)  = (T f) (x_{2j-1}+) , \quad j = 1, \ldots, n/2.
\ee
These join-up conditions imply that
\be\label{4.16}
q_{2j} (x_{2(j-1)}) + s_{2j} (x_{2(j-1)}) y_{j-1} = q_{2j-1} (x_{2j}) + s_{2j-1} (x_{2j})  y_j.
\ee
In the case that all functions $q_i$ and $s_i$ are constant, \eqref{4.16} reduces to the condition given in \cite[Example 2]{bhm}. 

We summarize these results in the next theorem.
\begin{theorem}\label{thm8}
Let $\sfX:= [0,1]$ and let $n\in 2\N$. Suppose that subsets of $\sfX$ are given by \eqref{Xs} and the associated mappings $l_i$ by \eqref{ls}. Further suppose that the functions $q_i$ and $s_i$ satisfy \eqref{qsandss} and that the join-up conditions \eqref{intcon} and \eqref{prejoin} hold. Then, an RB operator $T$ of the form \eqref{Top} maps $\cC_\Delta (\sfX)$ into itself and is well-defined. 

If, in addition, 
\be\label{condfors}
\max\left\{\|s_i\|_{\infty,\sfX_i} : i\in\N_n\right\} < 1,
\ee
then $T$ is a contraction on $\cC_\Delta (\sfX)$ and thus possesses a unique continuous fixed point $\psi:[0,1]\to \R$ satisfying $\psi\in \cC_\Delta (\sfX)$.
\end{theorem}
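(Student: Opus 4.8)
The plan is to verify the two assertions of the theorem in order: first that $T$ is a well-defined self-map of $\cC_\Delta(\sfX)$, and then that under \eqref{condfors} it is a contraction, so that the Banach fixed point theorem applies on the complete metric space $\cC_\Delta(\sfX)$.

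For the first part, I would start from the explicit formula \eqref{Top} and check that $Tf$ is continuous on each partitioning interval $[x_{i-1},x_i] = l_i(\sfX_i)$: this is immediate since $q_i, s_i \in \cC(\sfX_i)$ by \eqref{qsandss}, $l_i^{-1}$ is affine hence continuous, and $f_i = f|_{\sfX_i}$ is continuous as the restriction of a continuous function. The only place continuity of $Tf$ on all of $[0,1]$ can fail is at the knots $x_k$. I would separate these into the evenly indexed knots $x_{2j}$ — which are the interpolation nodes and also the shared endpoints $l_{2j-1}(\sfX_{2j-1})\cap l_{2j}(\sfX_{2j}) \ni$ type contact points between consecutive blocks — and the oddly indexed knots $x_{2j-1}$, which are interior images of the midpoints of $\sfX_{2j-1}=\sfX_{2j}$. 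At an oddly indexed knot the left and right limits of $Tf$ are computed from the two pieces $l_{2j-1}$ and $l_{2j}$ respectively, and equating them is exactly the content of \eqref{prejoin}, which unwinds (using $l_i^{-1}(x_{2j-1})$ = the relevant endpoint of $\sfX_{2j-1}=\sfX_{2j}$) to the explicit identity \eqref{4.16}; so continuity across the oddly indexed knots is precisely the hypothesis. At an evenly indexed knot $x_{2j}$ one similarly matches the piece coming from $l_{2j}$ (on the left) with the piece coming from $l_{2j+1}$ (on the right); but here I would instead observe that \eqref{intcon}, expanded via $T$ into \eqref{intcons}, forces $(Tf)(x_{2j}) = y_j$ from both sides, so $Tf$ is not only continuous at $x_{2j}$ but takes the prescribed interpolation value there. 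Hence $Tf \in \cC(\sfX)$ and $Tf(x_{2j}) = y_j$ for all $j$, i.e. $Tf \in \cC_\Delta(\sfX)$, which is the first claim. (One should note the verification of \eqref{intcons} uses that $f$ itself lies in $\cC_\Delta(\sfX)$, so that $f$ evaluated at the preimage endpoints returns the appropriate $y_{j-1}$ or $y_j$ — this is the one spot where membership in the subspace, rather than just in $\cC(\sfX)$, is used.)

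For the second part, let $g,h \in \cC_\Delta(\sfX)$. On each interval $l_i(\sfX_i) = [x_{i-1},x_i]$ the interior of which the $\chi_{l_i(\sfX_i)}$ have disjoint supports, so for $x \in l_i(\sfX_i)$,
\[
|Tg(x) - Th(x)| = |(s_i\circ l_i^{-1})(x)|\,\bigl|(g_i - h_i)(l_i^{-1}(x))\bigr| \le \|s_i\|_{\infty,\sfX_i}\,\|g_i - h_i\|_{\infty,\sfX_i} \le \|s_i\|_{\infty,\sfX_i}\,\|g-h\|_\infty.
\]
Taking the supremum over $i\in\N_n$ and over $x$ gives $\|Tg - Th\|_\infty \le \bigl(\max_{i\in\N_n}\|s_i\|_{\infty,\sfX_i}\bigr)\,\|g-h\|_\infty$, which is strictly less than $\|g-h\|_\infty$ whenever $g\neq h$ by \eqref{condfors}. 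This is exactly the $p=\infty$ estimate already carried out in the proof of Theorem \ref{th4.2}, so I would simply invoke that computation. Since $\cC_\Delta(\sfX)$ is a closed subspace of the Banach space $\cC(\sfX)$, hence a complete metric space, and $T$ maps it into itself and is a contraction there, the Banach fixed point theorem yields a unique $\psi\in\cC_\Delta(\sfX)$ with $T\psi=\psi$, and this $\psi$ is continuous and satisfies the interpolation conditions by definition of $\cC_\Delta(\sfX)$.

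The routine estimates here are genuinely routine — the contraction bound is a verbatim specialization of the $p=\infty$ case of Theorem \ref{th4.2}. The only real care needed, and the step I expect to be the crux of the bookkeeping, is the bookkeeping at the knots in the first part: one must correctly identify, for each knot $x_k$, which two of the affine maps $l_i$ have $x_k$ in the closure of their image, which endpoint of the (possibly repeated) domain interval $\sfX_i$ is the relevant preimage $l_i^{-1}(x_k)$, and hence translate \eqref{prejoin} and \eqref{intcon} into the pointwise identities \eqref{4.16} and \eqref{intcons}. Because the domains $\sfX_{2j-1}=\sfX_{2j}$ are repeated and the maps $l_{2j-1}, l_{2j}$ are half-scale translates, getting the indices and the $\pm$ one-sided limits lined up correctly is the one place an error could creep in; everything else follows the template of Section \ref{GFI} and Theorem \ref{th4.2}.
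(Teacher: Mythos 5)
Your proposal is correct and follows essentially the same route as the paper: the well-definedness and self-mapping of $T$ on $\cC_\Delta(\sfX)$ come from translating \eqref{intcon} and \eqref{prejoin} into the pointwise identities \eqref{intcons} and \eqref{4.16} at the evenly and oddly indexed knots (which the paper carries out in the text preceding the theorem, the theorem being stated as a summary of that derivation), and the contraction property is exactly the $p=\infty$ estimate from the proof of Theorem \ref{th4.2}, which is all the paper's own proof invokes. Your additional remark that membership of $f$ in $\cC_\Delta(\sfX)$, rather than merely in $\cC(\sfX)$, is what makes \eqref{intcons} equivalent to \eqref{intcon} is a correct and worthwhile observation.
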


This unique fixed point $\psi$ is called a \emph{continuous local fractal function}.

\begin{proof}
It remains to show that under the condition \eqref{condfors}, $T$ is contractive on $\cC_\Delta (\sfX)$. This, however, follows immediately from the case $p=\infty$ in the proof of Theorem \ref{th4.2}.
\end{proof}

For RB operators of the form \eqref{Top} mapping function spaces such as H\"older, Sobolev, Besov or Triebel Lizorkin into themselves, we refer the interested reader to \cite{m2,m3}.
\section{Non-stationary Fractal Interpolation}\label{sect5}
In this section, we extend the notion of global fractal interpolation to a non-stationary setting. In other words, we no longer assume that we keep the functions $q_i$ and $s_i$ the same at each level of iteration in the construction of the fixed point $\psi$.
\subsection{The non-stationary setting}
To this end, consider a doubly-indexed family of injective contractions $\{l_{i_k,k} : i_k\in \N_{n_k}, \, k\in \N\}$ from $\sfX\to \sfX$ where $\sfX$ is nonempty bounded subset of a normed space $\sfE$ generating a partition of $\sfX$ for each $k\in \N$ in the sense of \eqref{cl1} and \eqref{cl2}. 

Suppose that $\sfF$ is a Banach space, $\{q_{i_k,k}: i_k\in \N_{n_k}, \, k\in \N\} \subset \cB(\sfX,\sfF)$, and $\{s_{i_k,k}: i_k\in \N_{n_k}, \, k\in \N\}\subset \cB(\sfX,\R)$ are such that 
\[
s := \sup\limits_{k\in \N} \max\limits_{i_k\in \N_k} \|s_{i_k,k}\|_\infty < 1.
\] 
For each $k\in \N$, we define an RB operator $T_k: \cB(\sfX,\sfF)\to \cB(\sfX,\sfF)$ by
\begin{align}
(T_k f)(l_{i_k,k}(x)) &:= q_{i_k,k}(x) + s_{i_k,k}(x) \cdot f(x), \quad\forall x\in\sfX.\label{nonstat1}
\end{align}
It is not difficult to verify that each $T_k$ is a contraction on $\cB(\sfX,\sfF)$ with Lipschitz constant 
\be\label{Lip}
\Lip (T_k) = \max\limits_{i_k\in \N_k} \|s_{i_k,k}\|_\infty \leq s < 1.
\ee
In order to continue, we require the following definition and result from \cite{LDV} adapted to our current setting.

\begin{definition}{\cite[Definition 3.6]{LDV}}
Let $\{T_k\}_{k\in \N}$ be a sequence of transformations $T_k:\cB(\sfX,\sfF)\to \cB(\sfX,\sfF)$. A subset $\sI$ of $\cB(\sfX,\sfF)$ is called an invariant set of the sequence $\{T_k\}_{k\in \N}$ if
\[
\forall\,k\in \N\;\forall\,x\in \sI: T_k (x)\in \sI.
\]
\end{definition}
A criterion for obtaining an invariant domain for a sequence $\{T_k\}_{k\in \N}$ of transformations on $\cB(\sfX,\sfF)$ is also given in \cite{LDV}.

\begin{proposition}{ \cite[Lemma 3.7]{LDV}}\label{prop2.1}
Let $\{T_k\}_{k\in \N}$ be a sequence of transformations on $\cB(\sfX,\sfF)$. Suppose there exists a $g\in \cB(\sfX,\sfF)$ such that for all $f\in\cB(\sfX,\sfF)$
\[
\n{T_k(x) - g} \leq \mu\,\n{f - g} + M,
\]
for some $\mu\in [0,1)$ and $M> 0$. Then the ball $B_r(g)$ of radius $r = M/(1-\mu)$ centered at $g$ is an invariant set for $\{T_k\}_{k\in \N}$.
\end{proposition}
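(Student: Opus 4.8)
The plan is to verify the invariance property directly from the defining inequality. Given the hypothesis, for every $f\in\cB(\sfX,\sfF)$ we have $\n{T_k(f) - g} \leq \mu\,\n{f-g} + M$ for all $k\in\N$. I want to show that the closed ball $B_r(g) := \{f\in\cB(\sfX,\sfF) : \n{f - g} \leq r\}$ with $r = M/(1-\mu)$ is invariant, i.e. that $f\in B_r(g)$ implies $T_k(f)\in B_r(g)$ for every $k$.

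First I would take an arbitrary $f\in B_r(g)$, so $\n{f-g}\leq r$. Applying the hypothesis gives
\[
\n{T_k(f) - g} \leq \mu\,\n{f-g} + M \leq \mu r + M.
\]
Then the key computation is simply to observe that $\mu r + M = \mu\cdot\frac{M}{1-\mu} + M = \frac{\mu M + M(1-\mu)}{1-\mu} = \frac{M}{1-\mu} = r$. Hence $\n{T_k(f) - g}\leq r$, which means $T_k(f)\in B_r(g)$. Since $k\in\N$ and $f\in B_r(g)$ were arbitrary, the definition of invariant set is satisfied.

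There is essentially no obstacle here; the statement is an immediate consequence of the contraction-type estimate and the choice of radius, which is precisely the fixed point of the affine map $t\mapsto \mu t + M$. The only point worth a remark is that one should use the \emph{closed} ball (or note that $\mu r + M = r$ makes the distinction immaterial if one prefers the open ball, since the inequality is non-strict and the bound is attained only in the limiting sense). I would also note, for completeness, that since $\mu\in[0,1)$ and $M>0$ the radius $r$ is a well-defined positive real number, so $B_r(g)$ is a genuine nonempty set containing $g$ itself.
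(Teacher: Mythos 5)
Your proof is correct and complete: the single computation $\mu r + M = r$ with $r = M/(1-\mu)$ is exactly what makes the closed ball invariant, and your remark that $r$ is the fixed point of $t\mapsto \mu t + M$ is the right way to see why this radius is the natural choice. The paper does not actually reproduce an argument here --- it only refers the reader to \cite{LDV} --- so your direct verification supplies precisely the elementary step the paper omits; you also (correctly, if silently) read the hypothesis as $\n{T_k(f)-g}\leq \mu\,\n{f-g}+M$, repairing the typo $T_k(x)$ in the statement, and your caveat about taking the \emph{closed} ball is a genuine point, since the bound $\mu r + M = r$ is an equality and an open ball would not obviously be preserved.
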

\begin{proof}
The proof, although in a more general setting, is found in \cite{LDV}.
\end{proof}

\begin{proposition}\label{prop5.2}
Let $\{T_k\}_{k\in \N}$ be a sequence of RB operators of the form \eqref{nonstat1} on $(\cB(\sfX,\sfF),\n{\cdot})$. Suppose that the elements of $\{q_{i_k,k}: i_k\in \N_{n_k}, \, k\in \N\}$ satisfy
\be\label{condq}
\sup\limits_{k\in \N} \max\limits_{i_k\in \N_k} \n{q_{i_k,k}} \leq M,
\ee
for some $M > 0$. Then the ball $B_r(0)$ of radius $r=M/(1-s)$ centered at $0\in \cB(\sfX,\sfF)$ is an invariant set for $\{T_k\}_{k\in \N}$.
\end{proposition}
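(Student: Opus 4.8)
The plan is to verify directly the hypothesis of Proposition~\ref{prop2.1} with $g = 0$ and $\mu = s$, and then simply invoke that proposition. So the first step is to fix an arbitrary $k \in \N$ and an arbitrary $f \in \cB(\sfX,\sfF)$, and estimate $\n{T_k f}$ (note $g=0$, so $\n{T_k f - g} = \n{T_k f}$). By definition \eqref{nonstat1}, for $x \in \sfX$ and $i_k \in \N_{n_k}$ we have $(T_k f)(l_{i_k,k}(x)) = q_{i_k,k}(x) + s_{i_k,k}(x) f(x)$, and since the $l_{i_k,k}$ partition $\sfX$ by \eqref{cl1}--\eqref{cl2}, every point of $\sfX$ is $l_{i_k,k}(x)$ for a unique $i_k$ and $x$; hence
\[
\n{T_k f} = \sup_{x \in \sfX} \n{(T_k f)(x)}_\sfF = \max_{i_k \in \N_{n_k}} \sup_{x \in \sfX} \n{q_{i_k,k}(x) + s_{i_k,k}(x) f(x)}_\sfF.
\]

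The second step is the routine triangle-inequality bound: for each $i_k$ and $x$,
\[
\n{q_{i_k,k}(x) + s_{i_k,k}(x) f(x)}_\sfF \le \n{q_{i_k,k}(x)}_\sfF + |s_{i_k,k}(x)|\, \n{f(x)}_\sfF \le \n{q_{i_k,k}} + \|s_{i_k,k}\|_\infty \n{f},
\]
so taking suprema over $x$ and then the maximum over $i_k$, and using the standing bound $\sup_k \max_{i_k} \|s_{i_k,k}\|_\infty \le s$ together with hypothesis \eqref{condq}, we obtain
\[
\n{T_k f} \le \max_{i_k \in \N_{n_k}} \n{q_{i_k,k}} + \Big(\max_{i_k \in \N_{n_k}} \|s_{i_k,k}\|_\infty\Big) \n{f} \le s\, \n{f} + M.
\]
Since $k$ and $f$ were arbitrary, this is exactly the inequality $\n{T_k f - g} \le \mu \n{f - g} + M$ of Proposition~\ref{prop2.1} with $g = 0$, $\mu = s \in [0,1)$.

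The final step is to apply Proposition~\ref{prop2.1}: it yields that the ball $B_r(0)$ with $r = M/(1-\mu) = M/(1-s)$ is an invariant set for $\{T_k\}_{k\in\N}$, which is the claim.

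I do not anticipate a genuine obstacle here; the proposition is essentially a corollary of Proposition~\ref{prop2.1} with the particular choice $g=0$. The only points requiring a little care are (i) making sure that the partition conditions \eqref{cl1}--\eqref{cl2} are what legitimizes rewriting the supremum over $\sfX$ as a maximum over $i_k$ of a supremum over the common domain $\sfX$ (each $l_{i_k,k}$ being a bijection onto its image and the images covering $\sfX$ disjointly), and (ii) recording that $s < 1$ really does put $\mu = s$ in $[0,1)$, so the hypothesis of Proposition~\ref{prop2.1} is met and the radius $M/(1-s)$ is finite and positive.
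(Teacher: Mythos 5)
Your proof is correct and follows essentially the same route as the paper's: both verify the hypothesis of Proposition~\ref{prop2.1} with $g=0$ and $\mu=s$ by bounding $\n{T_k f}\le s\,\n{f}+M$ via the triangle inequality and the partition structure, then invoke that proposition to get the invariant ball of radius $M/(1-s)$. Your version is in fact slightly more explicit than the paper's about why the supremum over $\sfX$ decomposes as a maximum over the pieces $l_{i_k,k}(\sfX)$.
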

\begin{proof}
Let $x\in \sfX$. Then there exists an $i_k\in \N_{n_k}$ with $x\in l_{i_k,k}(\sfX)$. Thus, for any $f\in \cB(\sfX,\sfF)$, 
\begin{align*}
\n{T_k f (x)}_\sfF &\leq \n{s_{i_k,k}\circ l_{i_k,k}^{-1}(x) \cdot f\circ l_{i_k,k}^{-1}(x)}_\sfF + \n{T_k(0)}_\sfF 
\end{align*}
By \eqref{condq},  $T_k(0)$ is uniformly bounded on $\cB(\sfX,\sfF)$ by $M > 0$. Hence, 
\[
\n{s_{i_k,k}\circ l_{i_k,k}^{-1}(x) \cdot f\circ l_{i_k,k}^{-1}(x)}_\sfF \leq s \n{f\circ l_{i_k,k}^{-1}(x)}_\sfF,
\]
which gives, after taking the supremum over $x\in\sfX$, $\n{T_k f} \leq s\,\n{f} + M$. Proposition \eqref{prop2.1} now yields the statement. 
\end{proof}

To arrive at the main result of this section, we require yet another definition. (See also \cite[Section 4]{LDV}.)
\begin{definition}
Suppose that $f_0\in\cB(\sfX,\sfF)$ and that $\{T_k\}_{k\in \N}$ be a sequence of RB operators on $\cB(\sfX,\sfF)$. The sequences
\be
\Phi_k (f_0) := T_k\circ T_{k-1} \circ \cdots \circ T_1 (f_0)
\ee
and
\be
\Psi_k (f_0) := T_1\circ T_2 \circ \cdots \circ T_k (f_0)
\ee
are called the forward, respectively, backward trajectory of $f_0$.
\end{definition}

In connection to the above definition, we need the following result from \cite[Corollary 11]{LDV} adapted to our setting.

\begin{theorem}\label{th5.1}
Suppose that $\{T_k\}_{k\in \N}$ is a sequence of RB operators of the form \eqref{nonstat1} on $\cB(\sfX,\sfF)$. Further suppose that
\begin{enumerate}
\item there exists a nonempty closed invariant set $\sI\subseteq\cB(\sfX,\sfF)$ for $\{T_k\}_{k\in \N}$;
\item and
\be\label{eq5.6}
\sum_{k=1}^\infty\prod_{j=1}^k \Lip (T_j) < \infty.
\ee
\end{enumerate}
Then the backward trajectories $\Psi_k(f_0)$ converge for any initial $f_0\in\sI$ to a unique function $\psi\in\sI$.
\end{theorem}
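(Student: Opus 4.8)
The plan is to show that the sequence $\Psi_k(f_0) = T_1\circ T_2\circ\cdots\circ T_k(f_0)$ is Cauchy in $\cB(\sfX,\sfF)$, hence convergent by completeness, and then verify that the limit is independent of $f_0\in\sI$ and lies in $\sI$. First I would fix two initial points $f_0, g_0\in\sI$ and estimate $\n{\Psi_k(f_0) - \Psi_k(g_0)}$. Since each $T_j$ is affine with Lipschitz constant $\Lip(T_j)\le s$, and the composition $T_1\circ\cdots\circ T_k$ has Lipschitz constant at most $\prod_{j=1}^k \Lip(T_j)$, we get $\n{\Psi_k(f_0) - \Psi_k(g_0)} \le \bigl(\prod_{j=1}^k \Lip(T_j)\bigr)\n{f_0 - g_0}$, which tends to $0$ by \eqref{eq5.6} (the summability forces the partial products to $0$). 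This will handle uniqueness of the limit once convergence is established.

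For convergence, the key observation is that the backward trajectories are \emph{nested} in a useful way: $\Psi_{k+1}(f_0) = T_1\circ\cdots\circ T_k\bigl(T_{k+1}(f_0)\bigr) = (T_1\circ\cdots\circ T_k)(\tilde f_0)$ where $\tilde f_0 := T_{k+1}(f_0)\in\sI$ since $\sI$ is invariant. Thus $\Psi_{k+1}(f_0)$ and $\Psi_k(f_0)$ are the images under the \emph{same} map $T_1\circ\cdots\circ T_k$ of two points of $\sI$, namely $T_{k+1}(f_0)$ and $f_0$. Applying the Lipschitz bound above,
\[
\n{\Psi_{k+1}(f_0) - \Psi_k(f_0)} \le \Bigl(\prod_{j=1}^k \Lip(T_j)\Bigr)\,\n{T_{k+1}(f_0) - f_0}.
\]
Here I would use that $\sI$ is bounded — or, failing that, that $\n{T_{k+1}(f_0) - f_0}$ stays bounded because both $T_{k+1}(f_0)$ and $f_0$ lie in $\sI$ and one can compare to a fixed reference point; in the concrete situation of Proposition \ref{prop5.2} one has $\sI = B_r(0)$, so $\n{T_{k+1}(f_0) - f_0}\le 2r$. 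Denoting this uniform bound by $C$, we obtain $\n{\Psi_{k+1}(f_0) - \Psi_k(f_0)} \le C\prod_{j=1}^k \Lip(T_j)$, and summing over $k$ together with \eqref{eq5.6} shows that $\sum_k \n{\Psi_{k+1}(f_0) - \Psi_k(f_0)} < \infty$. Hence $(\Psi_k(f_0))_k$ is Cauchy and converges to some $\psi = \psi(f_0)\in\cB(\sfX,\sfF)$; since $\sI$ is closed and every $\Psi_k(f_0)\in\sI$, we get $\psi\in\sI$.

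Finally, independence of the limit from $f_0$ follows from the first estimate: for any $f_0, g_0\in\sI$, $\n{\psi(f_0) - \psi(g_0)} = \lim_{k\to\infty}\n{\Psi_k(f_0) - \Psi_k(g_0)} \le \lim_{k\to\infty}\bigl(\prod_{j=1}^k\Lip(T_j)\bigr)\n{f_0-g_0} = 0$, so $\psi$ is unique. The main obstacle I anticipate is the boundedness input needed to control $\n{T_{k+1}(f_0) - f_0}$ uniformly in $k$: the statement only assumes $\sI$ is closed, not bounded, so one must either invoke boundedness of $\sI$ as an implicit hypothesis (it holds in all the constructions of this section, e.g. the invariant ball of Proposition \ref{prop5.2}), or extract a uniform bound from the telescoping structure itself — e.g. show inductively that the $\Psi_k(f_0)$ stay within a fixed ball, using $\Lip(T_j)\le s < 1$ and $\n{T_j(0)}\le M$. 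Everything else is a routine telescoping-plus-summability argument of the kind used in the proof of the Banach Fixed Point Theorem.
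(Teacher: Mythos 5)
Your argument is correct, and it is worth noting that the paper itself gives no proof of this theorem: it is imported verbatim from \cite[Corollary 11]{LDV}, so there is nothing in the text to compare against. Your telescoping argument is precisely the standard proof of such backward-trajectory results. The one real issue is the one you yourself flag: the statement as written only assumes $\sI$ is nonempty and closed, while the estimate $\n{\Psi_{k+1}(f_0)-\Psi_k(f_0)}\leq\bigl(\prod_{j=1}^k\Lip(T_j)\bigr)\n{T_{k+1}(f_0)-f_0}$ only yields a summable series if $\n{T_{k+1}(f_0)-f_0}$ is bounded uniformly in $k$, which requires $\sI$ to be bounded (or some equivalent hypothesis such as $\sup_k\sup_{g\in\sI}\n{T_k g - g}<\infty$). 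This hypothesis is indeed present in the source \cite{LDV} and is satisfied in every application made in this section, since Proposition \ref{prop5.2} produces $\sI$ as a closed ball; your observation that the statement here silently drops it is a legitimate criticism of the statement rather than a gap in your proof. Two small points to tighten: the ``uniqueness'' claimed in the theorem is exactly the independence of the limit from $f_0$, which your first estimate handles, so you should say explicitly that this is all that is meant; and when you assert $\Psi_k(f_0)\in\sI$ you should spell out the one-line induction ($T_k(f_0)\in\sI$, hence $T_{k-1}(T_k(f_0))\in\sI$, and so on), since invariance is applied $k$ times from the inside out.
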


With these preliminaries, we obtain the main result of this section.

\begin{theorem}\label{thm4.1}
The backwards trajectories $\{\Psi_k\}_{k\in \N}$ converge for any initial $f_0\in \sI$ to a unique function $\psi \in \sI$, where $\sI$ is the closed ball in $\cB(\sfX,\sfF)$ of radius $M/(1-s)$ centered at $0$.
\end{theorem}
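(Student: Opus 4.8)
The plan is to verify that the hypotheses of Theorem~\ref{th5.1} are met by the sequence $\{T_k\}_{k\in\N}$ of RB operators \eqref{nonstat1}, so that the conclusion follows immediately. There are exactly two things to check: the existence of a nonempty closed invariant set, and the summability condition \eqref{eq5.6} on the Lipschitz constants.

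\textbf{Invariant set.} For the first hypothesis, I would invoke Proposition~\ref{prop5.2}. The standing assumption $\{q_{i_k,k}\}\subset\cB(\sfX,\sfF)$ together with an application of \eqref{condq} — which we may take as the defining property of the constant $M$, i.e.\ set $M := \sup_k\max_{i_k}\n{q_{i_k,k}}$, finite because each $q_{i_k,k}$ is bounded and (implicitly) these bounds are uniform — gives that $\sI := B_r(0)$ with $r = M/(1-s)$ is an invariant set for $\{T_k\}_{k\in\N}$. A closed ball in a Banach space is closed and nonempty, so hypothesis~(1) of Theorem~\ref{th5.1} holds with this $\sI$.

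\textbf{Summability.} For the second hypothesis, recall from \eqref{Lip} that $\Lip(T_k) = \max_{i_k\in\N_{n_k}}\n{s_{i_k,k}}_\infty \leq s < 1$ for every $k$. Hence $\prod_{j=1}^k \Lip(T_j) \leq s^k$, and therefore
\[
\sum_{k=1}^\infty \prod_{j=1}^k \Lip(T_j) \;\leq\; \sum_{k=1}^\infty s^k \;=\; \frac{s}{1-s} \;<\;\infty,
\]
so \eqref{eq5.6} is satisfied. Applying Theorem~\ref{th5.1} with this $\sI$ then yields that the backward trajectories $\Psi_k(f_0)$ converge, for every $f_0\in\sI$, to a unique $\psi\in\sI$, which is the assertion.

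\textbf{Where the work really is.} Since Theorem~\ref{th5.1} is quoted as a black box, the proof above is essentially a two-line bookkeeping exercise; the only mild subtlety is making sure the constant $M$ in the statement is the same one furnished by \eqref{condq} and that the uniform bound on $\{q_{i_k,k}\}$ is genuinely available (it is, by hypothesis on the non-stationary data). The real mathematical content — that backward trajectories of a sequence of contractions with summable Lipschitz products converge to a limit independent of the starting point — lives entirely in \cite{LDV} and in Theorem~\ref{th5.1}; here we are simply checking that the uniform contractivity $s<1$ is more than enough to force the geometric, hence summable, decay of $\prod_j\Lip(T_j)$. I would therefore keep the proof short and point to Proposition~\ref{prop5.2} and Theorem~\ref{th5.1} for the two ingredients.
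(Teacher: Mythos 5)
Your proof is correct and follows exactly the paper's own argument: the paper likewise cites Proposition~\ref{prop5.2} for the invariant ball and verifies \eqref{eq5.6} via $\prod_{j=1}^k \Lip(T_j) \leq s^k$ and the geometric series, then concludes by Theorem~\ref{th5.1}. No differences worth noting.
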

\begin{proof}
By Proposition \ref{prop5.2} and Theorem \ref{th5.1} it remains to be shown that \eqref{eq5.6} is satisfied. This, however, follows directly from \eqref{Lip}:
\[
\prod_{j=1}^k \Lip (T_j) \leq s^k\quad\text{and}\quad\sum_{k=1}^\infty s^k = \frac{s}{1-s} < \infty.\qedhere
\]
\end{proof}

A fixed point $\psi$ generated by a sequence $\{T_k\}$ of non-stationary RB operators will be called a \emph{non-stationary fractal function (of class $\cB(X,Y)$).}

Employing a non-stationary sequence of RB operators $\{T_k\}$ allows the construction of more general fractal functions exhibiting different local behavior at different scales. This is illustrated by the following example which is taken from \cite{m4}.

\begin{example}
Let $\sfX:= [0,1]\subset\R$ and $\sfF:= \R$. Consider the two RB operators
\[
T_1 f (x) := \begin{cases} 2x + \frac12 f(2x), & x\in [0,\frac12),\\
2 - 2x + \frac12 f(2x-1), & x\in [\frac12,1],
\end{cases}
\]
and 
\[
T_2 f (x) := \begin{cases} 2x + \frac14 f(2x), & x\in [0,\frac12),\\
2 - 2x + \frac14 f(2x-1), & x\in [\frac12,1].
\end{cases}
\]
For both operators, $l_i (x) := \frac12 (x + i-1)$, $i=1,2$.

The sequence $T_1^k f \to \tau$, where $\tau$ denotes the Takagi function \cite{Tak} and $T_2^k\to q$, where $q(x)= 4x(1-x)$.

Consider the alternating sequence $\{T_k\}_{k\in \N}$ of RB operators given by
\[
T_k := \begin{cases} T_1, & 10(j-1) < k \leq 10j-5,\\
T_2, & 10j-5 < k \leq 10j,
\end{cases}\quad j\in \N.
\]
Two images of this hybrid attractor of the backward trajectory $\Psi_k$ starting with $f_0 \equiv 0$ are shown in Figure \ref{fig11}.
\begin{figure}[h!]
\begin{center}
\includegraphics[width=5cm, height=3cm]{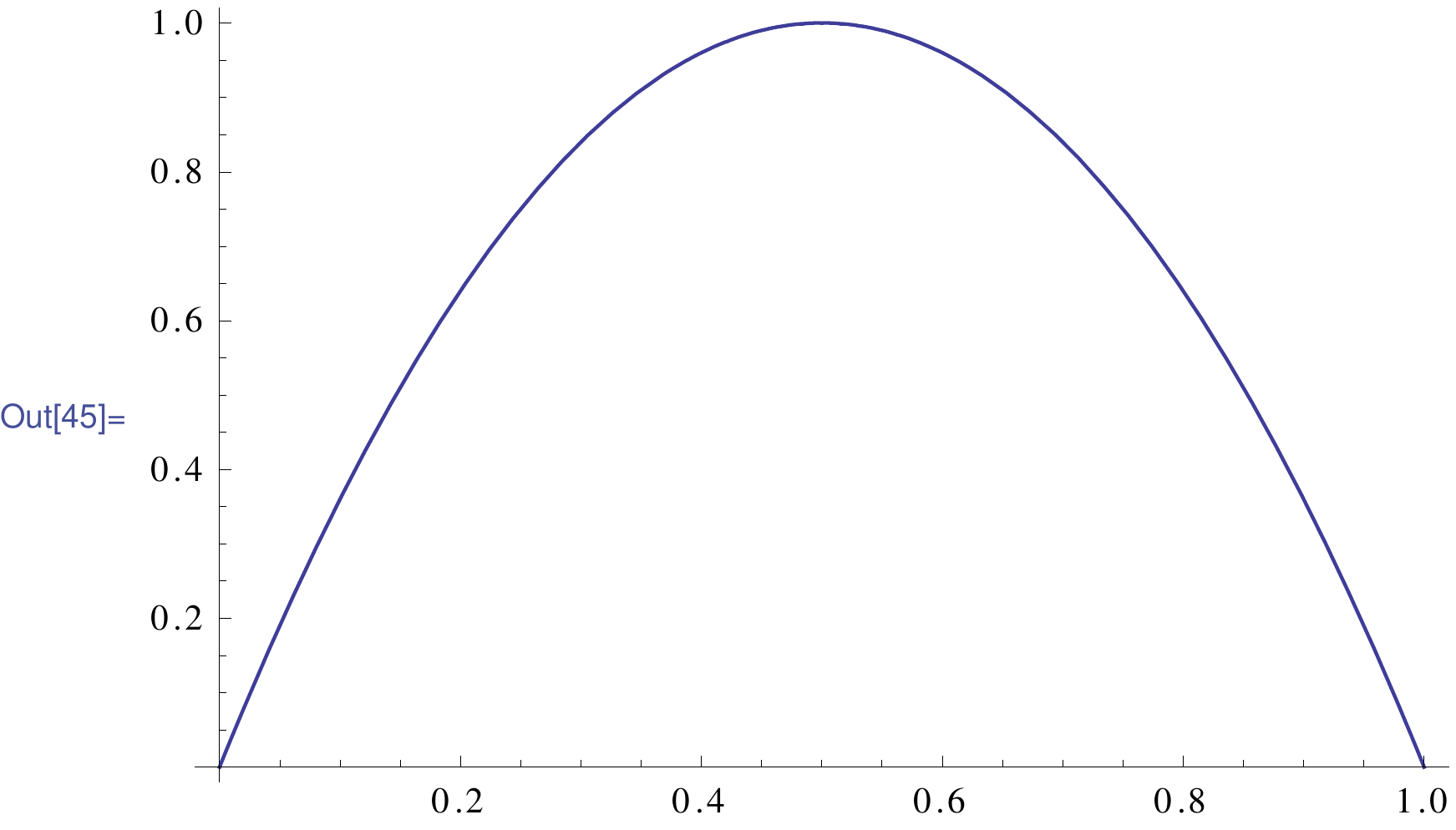}\hspace*{1cm}\includegraphics[width=5cm, height=3cm]{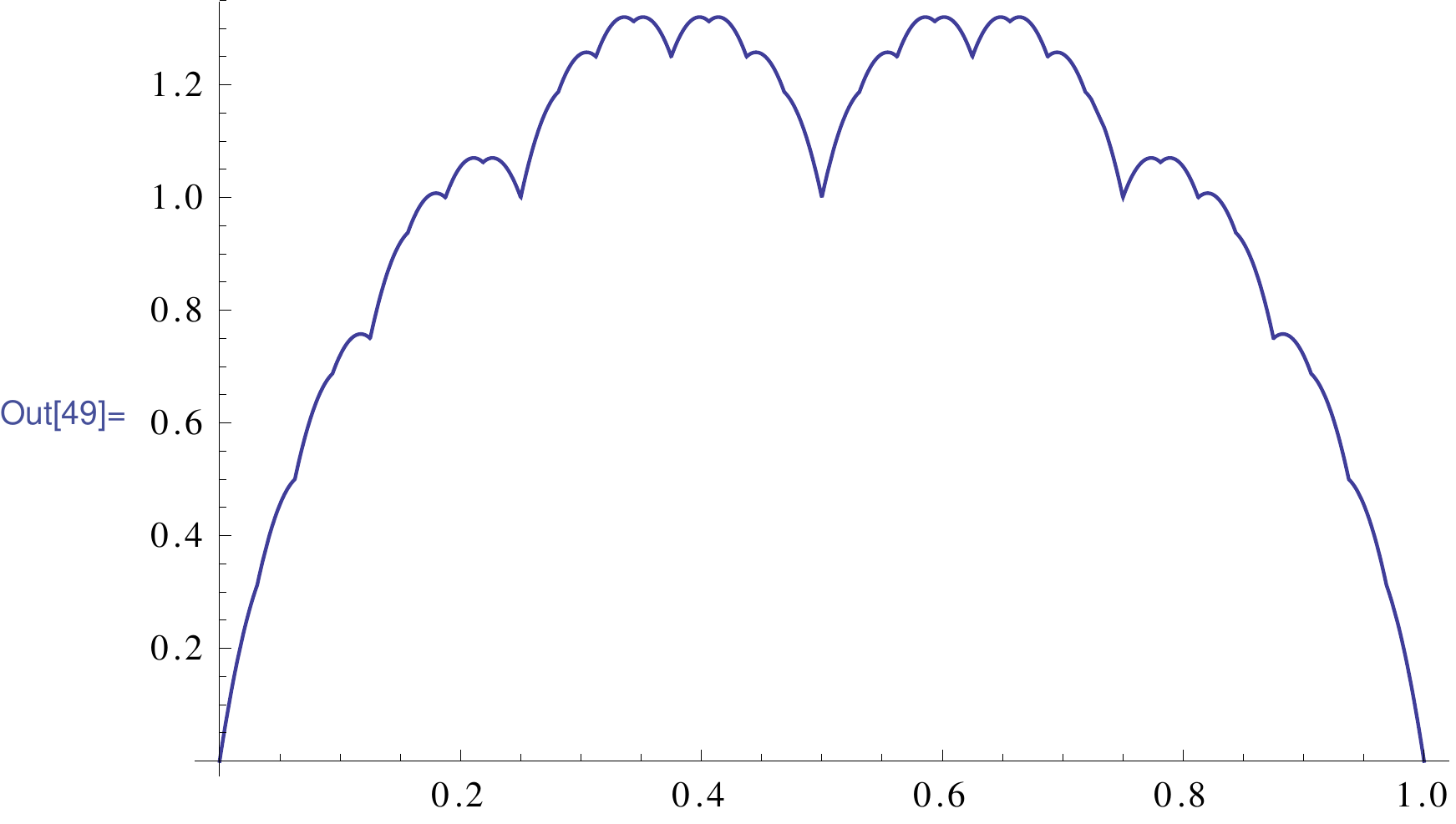}
\end{center}
\caption{The hybrid $\tau-q$ attractor. It is smooth at one scale but fractal at another.}\label{fig11}
\end{figure}
\end{example}

\subsection{Non-stationary Fractal Interpolation}

In this subsection, we consider the case $\sfX := [0,1]$ and $\sfF := \R$. In the following, we use a sequence $\{T_k\}$ of RB operators of a particular form to obtain a continuous fixed point $\psi$. 

To this end, consider an RB operator defined by \eqref{eq3.17}, choose two arbitrary functions $f,b\in \cB(\sfX) := \cB(\sfX, \R)$ and set
\be\label{qs}
q_i := f\circ l_i - s_i\cdot b.
\ee
The (stationary) RB operator associated with this particular choice is then given by
\be
(Tg)(x) = f + s_i(l_i^{-1}(x))\cdot(g-b)(l_i (x)), \quad x\in l_i(\sfX)
\ee
\begin{remark}
The fixed point of the above defined RB operator may be thought of as the ``fractalization" of a given function $f$.
\end{remark}

To work in a non-stationary setting, let $k\in \N$ and let $\{l_{i_k,k} : i_k\in \N_{n_k}, \, k\in \N\}$ be family of injections from $[0,1]\to [0,1]$ generating a partition of $[0,1]$ in the sense of \eqref{cl1} and \eqref{cl2}. We assume w.l.o.g. that $l_{1,k}(0) = 0$ and $l_{n_k,k}(1) = 1$ and define
\begin{align*}
x_{i_k-1,k} := l_{i_k,k}(0), \quad x_{i_k,k} := l_{i_k,k}(1), \quad i_k\in \N_{n_k}
\end{align*}
where $x_{0,k} :=0$ and $x_{n_k,k} := 1$. By relabelling -- if necessary -- we may further assume that $0 = x_{0,k}  < \cdots < x_{i_k-1,k} < x_{i_k,k} < \cdots x_{n_k,k} = 1$.

Let $f\in \cC(\sfX)$ be arbitrary. Define a metric subspace of $\cC(\sfX)$ by 
\[
\cC_*(\sfX) := \{g\in \cC(\sfX) : g(0) = f(0) \wedge g(1) = f(1)\}
\] 
and note that $\cC_*(\sfX)$ becomes a complete linear metric space when endowed with the metric $d$ that is induced by the sup-norm from $\cC(\sfX)$. 

Furthermore, let $b\in \mcC_*[0,1]$ be the unique affine function whose graph connects the points $(0, f(0))$ and $(1, f(1))$:
\be\label{b}
b(x) = (f(1) - f(0)) x + f(0).
\ee
Let $\{\cP_k\}_{k\in \N}$ be a family of sets of points in $\sfX\times \sfF$ where 
\[
\cP_k :=\{(x_{j_k}, f(x_{j,k})\in \sfX\times \sfF : j = 0,1,\ldots, n\}.
\]
For $k\in \N$, define an RB operator $T_k: \mcC_*[0,1]\to \mcC_*[0,1]$ by
\be\label{eq5.1}
T_k g = f + \sum_{i_k=1}^{n_k} s_{i_k,k}\circ l_{i_k,k}^{-1} \cdot (g-b)\circ l_{i_k,k}^{-1}\, \chi_{l_{i_k,k} [0,1]},
\ee
where $\{s_{i_k,k}\}_{i_k=1}^{n_k}\subset \cC[0,1]$ such that 
\[
\sup_{k\in \N} \max_{i_k\in \N_{i_k}} \|s_{i_k,k}\|_\infty < 1.
\]
Note that 
\begin{align*}
T_k g (x_{i_k,k}-) &= f(x_{i_k,k}-) + s_{i_k,k}\circ l_{i_k,k}^{-1}(x_{i_k,k}-) \cdot (g-b)\circ l_{i_k,k}^{-1}(x_{i_k,k}-) \\
&= f(x_{i_k,k}) + s_{i_k,k} (1)\cdot (f - b)(1) = f(x_{i_k,k})
\end{align*}
and
\begin{align*}
T_k g (x_{i_k,k}+) &= f(x_{i_k,k}+) + s_{i_k+1,k}\circ l_{i_k+1,k}^{-1}(x_{i_k,k}+) \cdot (g-b)\circ l_{i_k+1,k}^{-1}(x_{i_k,k}+) \\
&= f(x_{i_k,k}) + s_{i_k+1,k} (0)\cdot (f - b)(0) = f(x_{i_k,k}).
\end{align*}
implying that $T_k g$ is continuous at the points $x_{i_k,k}\in [0,1]$:
\[
T_k g (x_{i_k,k}-) = T_k g (x_{i_k,k}+), \quad\forall\,i_k\in\{1, \ldots, n-1\}.
\]
Hence,  $T_k g\in \cC_*[0,1]$ and $T_k g$ interpolates $\cP_k$ in the sense that
\[
T_k g (x_{i_k,k}) = f(x_{i_k,k}), \quad \forall\,i_k\in \N_{n_k}.
\]

\begin{proposition}
A nonempty closed invariant set for $\{T_k\}_{k\in \N}$ is given by the closed ball in $\cC_*(\sfX)$,
\be\label{I}
\sI = \left\{g\in \mcC_*[0,1] : \|g\| \leq \frac{\|f\| + s \|b\|}{1-s}\right\},
\ee
where $s$ is defined by \eqref{Lip}.
\end{proposition}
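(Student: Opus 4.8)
The plan is to mirror Proposition \ref{prop5.2}: verify that $\sI$ is a nonempty closed subset of the complete metric space $\cC_*(\sfX)$, check that each $T_k$ from \eqref{eq5.1} maps $\cC_*(\sfX)$ into itself (this has essentially been done already in the computation preceding the statement), and then apply Proposition \ref{prop2.1} with reference point $0$, contraction factor $\mu=s$, and $M=\n{f}+s\n{b}$, so that the invariant ball has exactly radius $M/(1-s)$.

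First I would dispose of nonemptiness and closedness. Since $0\le s<1$ we have $r:=(\n{f}+s\n{b})/(1-s)\ge\n{f}$, and $f\in\cC_*(\sfX)$ with $\n{f}\le r$, so $f\in\sI$ and $\sI\neq\emptyset$; in fact $\n{b}=\max\{|f(0)|,|f(1)|\}\le\n{f}\le r$, so $b\in\sI$ too. Closedness holds because $\sI$ is the intersection of the closed norm ball $\{g:\n{g}\le r\}$ with the affine subspace $\cC_*(\sfX)=\{g\in\cC(\sfX):g(0)=f(0),\ g(1)=f(1)\}$, which is closed in $\cC(\sfX)$ as the intersection of the preimages of points under the continuous evaluation maps $g\mapsto g(0)$, $g\mapsto g(1)$; being a closed subset of the complete space $\cC_*(\sfX)$, $\sI$ is itself complete (which is what later makes Theorem \ref{th5.1} applicable).

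Then comes the core estimate. Fix $k\in\N$ and $g\in\cC_*(\sfX)$. For each $x\in\sfX$, conditions \eqref{cl1}--\eqref{cl2} give a unique $i_k\in\N_{n_k}$ with $x\in l_{i_k,k}([0,1])$, and then \eqref{eq5.1} yields
\be
|T_k g(x)|\le |f(x)|+|s_{i_k,k}(l_{i_k,k}^{-1}(x))|\cdot|(g-b)(l_{i_k,k}^{-1}(x))|\le \n{f}+s\,(\n{g}+\n{b}).
\ee
Taking the supremum over $x\in\sfX$ gives $\n{T_k g}\le\n{f}+s\n{b}+s\n{g}$, i.e. $\n{T_k g-0}\le\mu\,\n{g-0}+M$ with $\mu=s$, $M=\n{f}+s\n{b}$; Proposition \ref{prop2.1} then identifies $B_{M/(1-\mu)}(0)=\sI$ as an invariant set, and since each $T_k$ preserves $\cC_*(\sfX)$ it preserves the intersection $\sI$. (Alternatively one finishes directly: if $\n{g}\le r$ then $\n{T_k g}\le\n{f}+s\n{b}+sr=r$, using $r(1-s)=\n{f}+s\n{b}$.)

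The argument is routine; the only thing needing a little care is the bookkeeping around the fact that $\sI$ is described as "centered at $0$" even though $0\notin\cC_*(\sfX)$ in general. One should therefore read $\sI$ as the intersection of a norm ball with the affine subspace $\cC_*(\sfX)$, and either invoke Proposition \ref{prop2.1} on the ambient space $\cB([0,1],\R)$ — on which the formula \eqref{eq5.1} still defines each $T_k$ — and then restrict, or simply use the one-line direct estimate above. I do not expect any genuine obstacle.
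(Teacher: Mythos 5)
Your argument is correct and follows essentially the same route as the paper: the paper's proof simply notes that by \eqref{qs} one has $\|q_{i_k,k}\| \leq \|f\| + s\|b\|$ and then invokes Proposition \ref{prop2.1}, which is exactly the content of your core estimate $\n{T_k g}\le s\n{g}+\n{f}+s\n{b}$. Your additional remarks on closedness, nonemptiness, and the fact that $0\notin\cC_*(\sfX)$ are sensible elaborations of details the paper leaves implicit, not a different method.
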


\begin{proof}
From the form \eqref{qs} of the functions $q_{i_k,k}$, we obtain from \eqref{condq} the estimate $\|q_{i_k,k}\| \leq \|f\| + s \|b\|$, which by Proposition \ref{prop2.1} yields the result.
\end{proof}

Together with Theorem \ref{thm4.1}, the above arguments prove the next theorem.

\begin{theorem}
Let $\{T_k\}_{k\in \N}$ be a sequence of RB operators of the form \eqref{eq5.1} each of whose elements acts on the complete metric space $(\cC_*(\sfX), d)$ where $f\in \cC_*(\sfX)$ is arbitrary and $b$ is given by \eqref{b}. Furthermore, let the family of functions $\{s_{i_k,k}\} \subset \cC(\sfX)$ satisfy \eqref{Lip}. Then, for any $f_0\in \sI$, the backward trajectories $\Psi_k (f_0)$ converge to a function $\psi \in \sI$ which interpolates $\cP_k$.
\end{theorem}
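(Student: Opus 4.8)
The plan is to assemble this result directly from the machinery already developed in this section, since essentially all of the work has been done. First I would observe that each operator $T_k$ defined by \eqref{eq5.1} is exactly an RB operator of the form \eqref{nonstat1} with the specific choice $q_{i_k,k} = f\circ l_{i_k,k} - s_{i_k,k}\cdot b$ as in \eqref{qs}; hence \eqref{Lip} applies and gives $\Lip(T_k) = \max_{i_k}\|s_{i_k,k}\|_\infty \leq s < 1$. The computations preceding the statement already verify that $T_k$ maps $\cC_*(\sfX)$ into itself (the one-sided limits at the interior knots $x_{i_k,k}$ agree, using $(f-b)(0) = (f-b)(1) = 0$), and that $T_k g$ interpolates $\cP_k$. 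So the operators are legitimately defined on the complete metric space $(\cC_*(\sfX), d)$.

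Next I would invoke Theorem \ref{th5.1}, whose two hypotheses I must check. For hypothesis (1), the existence of a nonempty closed invariant set $\sI\subseteq \cC_*(\sfX)$, I would simply cite the Proposition immediately preceding this theorem: the closed ball $\sI$ in \eqref{I} of radius $(\|f\| + s\|b\|)/(1-s)$ centered at $0$ is invariant for $\{T_k\}$. (One should note $\sI$ is closed and nonempty in $\cC_*(\sfX)$, which is immediate since $\cC_*(\sfX)$ is a closed subspace of $\cC(\sfX)$ and a closed ball in it is closed.) For hypothesis (2), the summability condition \eqref{eq5.6}, I would argue exactly as in the proof of Theorem \ref{thm4.1}: since each factor satisfies $\Lip(T_j)\leq s$, we get $\prod_{j=1}^k \Lip(T_j) \leq s^k$, so $\sum_{k=1}^\infty \prod_{j=1}^k \Lip(T_j) \leq \sum_{k=1}^\infty s^k = s/(1-s) < \infty$.

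With both hypotheses of Theorem \ref{th5.1} verified, I conclude that the backward trajectories $\Psi_k(f_0) = T_1\circ T_2\circ\cdots\circ T_k(f_0)$ converge, for any initial $f_0\in\sI$, to a unique limit $\psi\in\sI$. It then remains to record that $\psi$ interpolates $\cP_k$; this follows because each $T_k g$ satisfies $T_k g(x_{i_k,k}) = f(x_{i_k,k})$ at the knots of $\cP_k$ by the computation before the statement, and $\Psi_k(f_0) = T_1(T_2\cdots T_k(f_0))$ has its outermost operator $T_1$, so $\Psi_k(f_0)$ interpolates $\cP_1$; more precisely, since convergence is in the sup-norm and each interpolation condition is a closed (point-evaluation) constraint, the limit $\psi$ inherits whichever interpolation conditions are stable along the sequence. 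I would phrase the conclusion as: $\psi$ interpolates the data $\cP_k$ in the same sense made precise in the discussion preceding the theorem.

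The only genuinely delicate point — and the place I would be most careful in writing — is the precise meaning of ``$\psi$ interpolates $\cP_k$'' for a single limiting function $\psi$, given that the data sets $\cP_k$ vary with $k$ and the knots $x_{j,k}$ need not stabilize. The honest statement is that each finite composition $\Psi_k(f_0)$ has its leftmost factor $T_1$ and therefore interpolates $\cP_1$ exactly; so $\psi = \lim_k \Psi_k(f_0)$ interpolates $\cP_1$ by continuity of point evaluations. Asserting interpolation of \emph{all} $\cP_k$ simultaneously would require the knot sets to be nested and the sample values consistent, which is not assumed here; I would therefore state the theorem's conclusion as interpolation of $\cP_1$ (equivalently, the data carried by $f$ at the first-level knots), matching the role of $T_1$ as the outermost operator in the backward trajectory, and remark that under nested partitions one recovers interpolation at every level. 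Everything else is a direct citation of Proposition \ref{prop2.1}, Proposition \ref{prop5.2} (adapted via \eqref{qs}), and Theorems \ref{th5.1} and \ref{thm4.1}, so no new estimate is needed.
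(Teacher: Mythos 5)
Your proposal is correct and is essentially the paper's own proof, which consists of exactly this assembly: the preceding computations showing $T_k$ preserves $\cC_*(\sfX)$ and interpolates at the knots, the invariant ball \eqref{I} from the preceding proposition, and Theorem \ref{thm4.1} (itself resting on Proposition \ref{prop2.1}, Theorem \ref{th5.1}, and the geometric bound $\prod_{j=1}^k \Lip(T_j)\le s^k$). Your closing caveat is also well taken and is glossed over by the paper: since $T_1$ is the outermost factor of $\Psi_k(f_0)$, each $\Psi_k(f_0)$ interpolates $\cP_1$ exactly and hence so does the uniform limit $\psi$, but interpolation of every $\cP_k$ simultaneously would require nested knot sets with consistent data, which is not assumed.
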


We refer to the fixed point $\psi\in \cC_*(\sfX)$ as a \emph{continuous non-stationary fractal interpolation function}.

\begin{remark}
As $f_0$ one may choose $f$ or $b$.
\end{remark}

To illustrate the above results, we present the following example from \cite{m4}.

\begin{example}
Consider the two RB operators $T_i: C[0,1]\to C[0,1]$, $i=1,2$, defined by
\[
(T_1 f)(x) = \begin{cases} -\frac{1}{2}\,f(4x), & x\in[0,\frac{1}{4}),\\
 -\frac{1}{2} + \frac{1}{2}\,f(4x-1), & x\in[\frac{1}{4},\frac{1}{2}),\\
 \frac{1}{2}\,f(4x-2), & x\in[\frac{1}{2},\frac{3}{4}),\\
 \frac{1}{2} + \frac{1}{2}\,f(4x-3), & x\in[\frac{3}{4},1],\end{cases}
\]
and 
\[
(T_2 f)(x) := \begin{cases} \frac34 f(2x), & x \in [0,\frac12),\\
\frac34 + \frac14 f(2x-1), &  x \in [\frac12,1].
\end{cases}
\]
The RB operators $T_1$ and $T_2$ generate \emph{Kiesswetter's fractal function} \cite{Kiess}, respectively, a \emph{Casino function} \cite{DS}. 

Consider again the alternating sequence $\{T_i\}_{i\in \N}$ of RB operators given by
\[
T_k := \begin{cases} T_1, & 10(j-1) < k \leq 10j-5,\\
T_2, & 10j-5 < k \leq 10j,
\end{cases}\quad j\in \N.
\]
Two images of the hybrid attractor of the backward trajectory $\Psi_k$ starting with the function $f_0 (x) = x$, $x\in\sfX$, are shown below in Figure \ref{fig22}.
\begin{figure}[h!]
\begin{center}
\includegraphics[width=5cm, height=4cm]{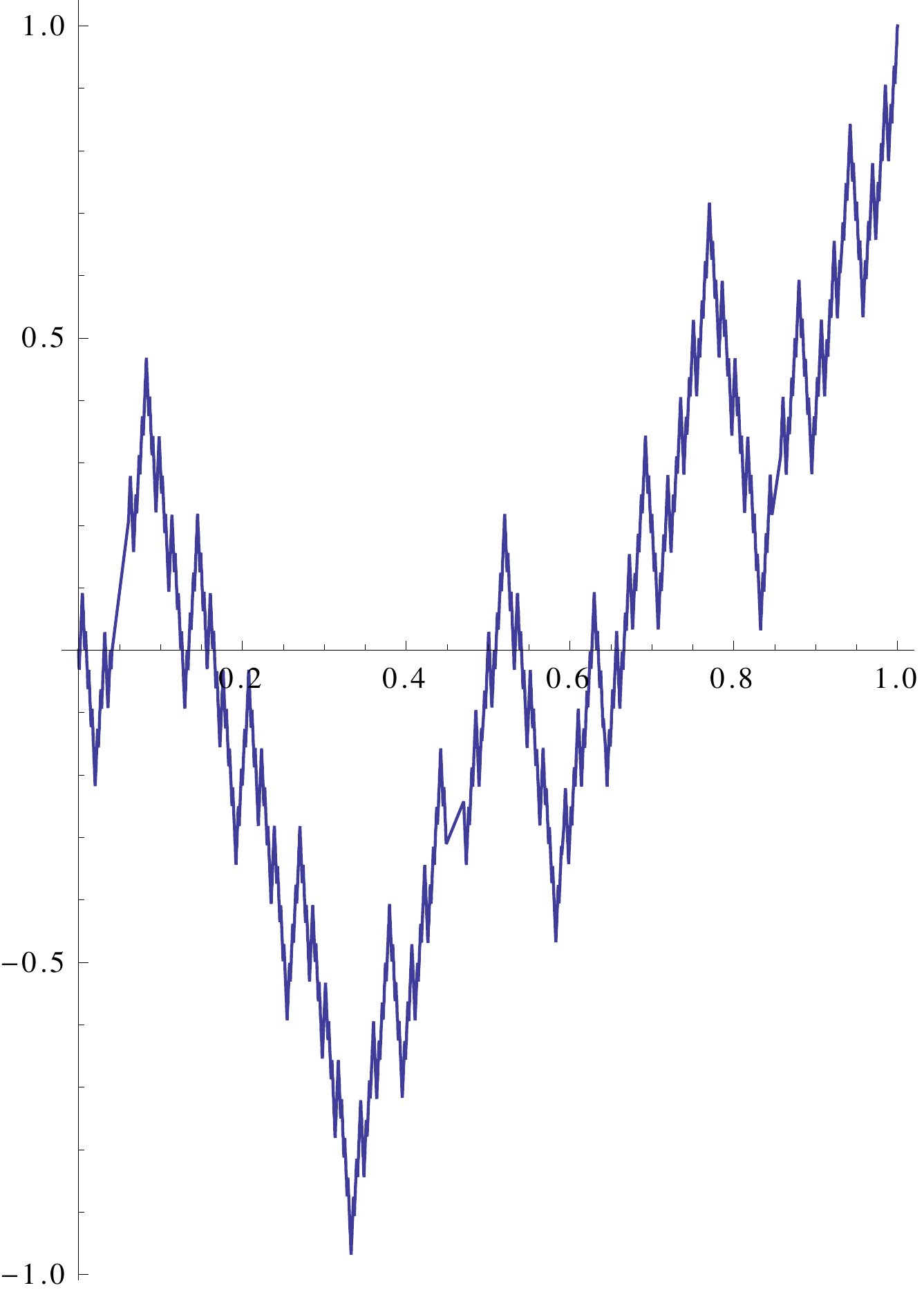}\hspace*{1cm}\includegraphics[width=5cm, height=4cm]{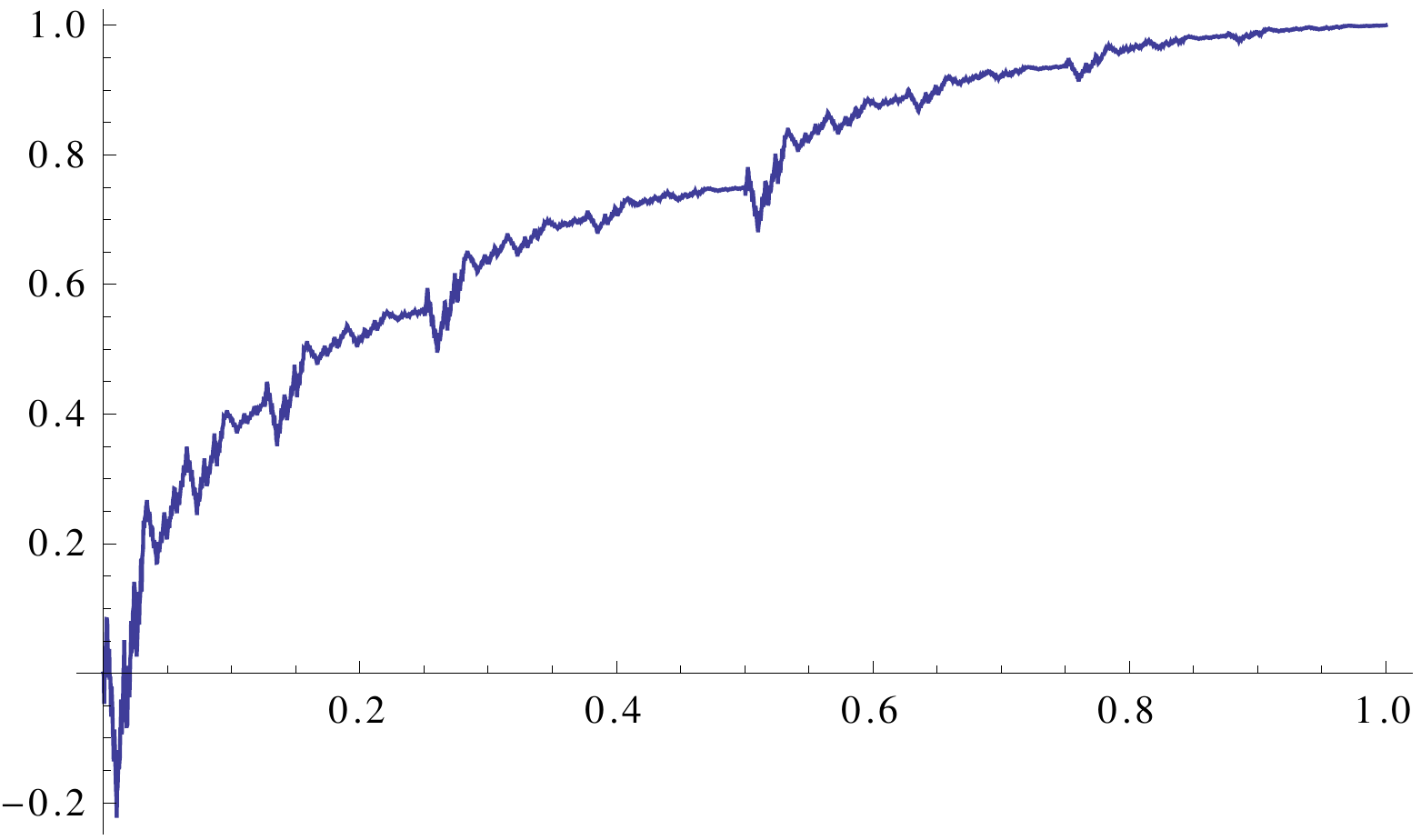}
\end{center}
\caption{The hybrid Kiesswetter-Casino attractor.}\label{fig22}
\end{figure}

\end{example}

\section{Quaternionic Fractal Interpolation}
In this section, we extend fractal interpolation to a quaternionic setting. As quaternions from a non-commutative division algebra, the non-commutativity generates more intricate fractal patterns. 

First, we give a short introduction to quaternions and present some of those properties that are relevant for the remainder of this section. The interested reader is referred to the literature on the subject, a short and subjective list of which is \cite{Bour,GHS,GS,Krav,MGS}.

\subsection{A Brief Introduction to Quaternions}
Let $\{e_1, e_2, e_3\}$ be the canonical basis of the Euclidean vector space $\R^3$. We call $\{e_1, e_2, e_3\}$ imaginary units and require that the following multiplication rules hold:
\begin{gather}
e_1^2=e_2^2=e_3^2=-1,\label{6.1}\\ 
e_1e_2=e_3=-e_2e_1,\quad e_2e_3=e_1=-e_3e_2,\quad e_3e_1=e_2=-e_1e_3.\label{6.2}
\end{gather}
\begin{remark}
Note that \eqref{6.2} is equivalent to $e_1e_2e_3 = -1$.
\end{remark}
A \emph{real quaternion $q$} is then an expression of the form
\[
q = a+\sum_{i=1}^3v_ie_i, \quad a, v_1, v_2, v_3\in\R.
\]
The addition and multiplication of two quaternions $q_1 = a+\sum_{i=1}^3v_ie_i$ and $q_2 = b+\sum_{i=1}^3w_ie_i$ is defined by
\begin{align*}
q_1 + q_2 &:= (a+b) + \sum_{i=1}^3 (v_i+w_i) e_i\\
q_1 q_2&:= (ab - v_1w_1 - v_2w_2 - v_3w_3)\\
&\;\;+(a {w_1}+b {v_1}+{v_2} {w_3}-{v_3} {w_2})e_1\\
&\;\;+(a {w_2}+b {v_2}-{v_1} {w_3}+{v_3} {w_1})e_2\\
&\;\;+(a {w_3}+b {v_3}+{v-1}{w_2}-{v_2} {w_1})e_3.
\end{align*}
Each quaternion $q=a+\sum\limits_{i=1}^3v_ie_i$ may be decomposed as $q=\Sc (q)+\Ve (q)$ where $\Sc (q)=a$ is the {\it scalar part} of $q$ and $\Ve (q)=v=\sum\limits_{i=1}^3v_ie_i$ is the {\it vector part} of $q$. $v=\Ve (q)$ is also called a \emph{quaternionic vector}.

The {\it conjugate} $\overline{q}$ of the real quaternion $q=a+v$ is the quaternion $\overline{q}=a-v$. Note that $q\overline{q}=\overline{q}q=a^2+|v|^2=a^2+\sum\limits_{i=1}^3v_i^2$. Therefore, we can define a norm on $\bH$ by setting
\[
\abs{q} := \sqrt{q\overline{q}}.
\]
The inverse of a quaternion $q$ is given by
\[
q^{-1} = \frac{\overline{q}}{\abs{q}^2}.
\]

It is straight-forward to establish that the collection of all real quaternions
\[
\bH := \bH_\R := \left\{a+\sum_{i=1}^3v_ie_i : a, v_1, v_2, v_3\in{\mathbb R}\right\},
\]
is a four-dimensional associative normed division algebra over $\R$. Due to the multiplication rules \eqref{6.1} and \eqref{6.2}, $\bH$ is not commutative. We also note that $\bH$, as already indicated above, is a four-dimensional vector space over $\R$ with basis $\{e_0,e_1,e_2,e_3\}$, where $e_0 := 1$.

\begin{remark}
We note that $\bH$ and $\R^4$ are identical as \emph{point sets} but differ in their algebraic structure. The vector space $\R^4$ is not an algebra whereas $\bH$ is one, albeit non-commutative.
\end{remark}

\begin{remark}
As we can define a norm and therefore a metric on $\bH$, $\bH$ becomes a topological space where open sets are defined via the metric. All other topological concepts such as limits, convergence, compactness etc. then follow. $\bH$ thus becomes a topological space and, moreover, also a complete metric space.
\end{remark}

We also remark that if $v=\sum\limits_{j=1}^3v_je_j$ and $w=\sum\limits_{j=1}^3w_je_j$ are quaternionic vectors, then 
\begin{equation}
vw=-\langle v,w\rangle +v\wedge w\label{vec mult},
\end{equation}
where $\langle v,w\rangle :=\sum\limits_{j=1}^3v_jw_j$ is the scalar product of $v$ and $w$ and 
$$v\wedge w :=(v_2w_3-v_3w_2)e_1+(v_3w_1-v_1w_3)e_2+(v_1w_2-v_2w_1)e_3$$ 
is the vector (cross) product of $v$ and $w$.

For our purposes, we need to introduce the analog of a Banach space in the quaternionic setting. To this end, we begin with the following definitions. (See, also \cite{Bour}.)
\begin{definition}
A real vector space $\sfV$ is called a left quaternionic vector space if it is a left $\bH$-module, i.e., if there exists a mapping $\bH\times\sfV\to \sfV$, $(q,v)\mapsto q v$ which satisfies
\begin{enumerate}
\item $\forall v\in \sfV\,\forall q_1,q_2\in \bH:$ $(q_1+q_2)v = q_1 v + q_2 v$.
\item $\forall v_1,v_2\in \sfV\,\forall q\in \bH:$ $q (v_1+v_2) = q v_1 + q v_2$.
\item $\forall v\in \sfV\,\forall q_1,q_2\in \bH:$ $q_1(q_2 v) = (q_1 q_2) v$.
\end{enumerate}
\end{definition}

\begin{remark}
In analogous fashion, one defines a \emph{right} quaternionic vector space as a right $\bH$-module where the mapping is now $\sfV\times\bH\to V$, $(v,q)\mapsto v q$.
\end{remark}

A \emph{two-sided quaternionic vector space} $\sfV$ is a left and right quaternionic vector space such that $\lambda v = v \lambda$, for all $\lambda\in\R$ and for all $v\in\sfV$. An example of a two-sided quaternionic vector space is given by $\bH$ itself.

\begin{remark}
A quaternionic vector space becomes a real vector space when its scalar multiplication is restricted to $\R$.
\end{remark}

One can start with any real vector space $\sfV_\R$ and construct a two-sided quaternionic vector space by setting
\[
\sfV_\bH := \left\{\sum_{i=0}^3 v_i\otimes e_i : v_i\in \sfV_\R\right\},
\]
where $\otimes$ denotes the algebraic tensor product. (See, for instance, \cite{Ng}.) 

On the other hand, given any two-sided quaternionic vector space and defining
\[
\sfV_\R := \{v\in \sfV : \lambda v = v \lambda,\;\forall\lambda\in\bH\},
\]
then $\sfV_\R$ is a real vector space called the \emph{real part of $\sfV$}.

The proof of the next result can be found in, i.e., \cite{Ng}.

\begin{proposition}
Let $\sfV$ be a two-sided quaternionic vector space and let $\sfV_\R$ denote its real part. Then $V\cong \sfV_\R\otimes\bH$.
\end{proposition}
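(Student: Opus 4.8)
The plan is to exhibit mutually inverse $\bH$-bimodule homomorphisms between $\sfV$ and $\sfV_\R\otimes_\R\bH$. One direction is essentially forced: define $\Phi\colon \sfV_\R\otimes_\R\bH\to\sfV$ by $\Phi(u\otimes q):=uq$ (the right $\bH$-action on $\sfV$). Since $\lambda u=u\lambda$ for $\lambda\in\R$ and $u\in\sfV_\R$, the map $(u,q)\mapsto uq$ is $\R$-bilinear, so it descends to the tensor product; and using associativity together with the centrality of $\sfV_\R$ one checks routinely that $\Phi$ intertwines the left and right $\bH$-actions, where $\sfV_\R\otimes_\R\bH$ is given the bimodule structure $p\,(u\otimes q)\,r:=u\otimes(pqr)$.

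The substance is building the inverse. First I would introduce the averaging (Reynolds) operator
\[
\pi_0\colon\sfV\to\sfV,\qquad \pi_0(v):=\tfrac14\sum_{i=0}^3\overline{e_i}\,v\,e_i=\tfrac14\bigl(v-e_1ve_1-e_2ve_2-e_3ve_3\bigr),
\]
and record two facts. (b) The scalar identity $\sum_{i=0}^3\overline{e_i}\,q\,e_i=4\,\Sc(q)$ for $q\in\bH$, equivalently $\sum_{i=0}^3\overline{e_i}\,e_k\,e_i=4\,\delta_{k0}$, which is checked on the basis $\{e_0,e_1,e_2,e_3\}$ directly from \eqref{6.1}--\eqref{6.2}; similarly $\Sc(e_i\overline{e_j})=\delta_{ij}$. (a) $\pi_0(\sfV)\subseteq\sfV_\R$: because $\bH$ is generated over $\R$ by $e_1,e_2,e_3$ and $\lambda w=w\lambda$ for $\lambda\in\R$ in a two-sided space, it suffices to verify $e_m\pi_0(v)=\pi_0(v)e_m$ for $m=1,2,3$; expanding both sides with \eqref{6.1}--\eqref{6.2} yields the same four-term expression (I would carry out $m=1$ and invoke the cyclic symmetry $e_1\to e_2\to e_3\to e_1$). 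Then define $\Psi\colon\sfV\to\sfV_\R\otimes_\R\bH$ by $\Psi(v):=\sum_{j=0}^3\pi_0(v\overline{e_j})\otimes e_j$, which is legitimate by (a) and visibly $\R$-linear.

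Finally I would verify $\Phi\circ\Psi=\id_\sfV$ and $\Psi\circ\Phi=\id$. For the first, $\Phi\Psi(v)=\tfrac14\sum_{j,k}\overline{e_k}\,v\,\overline{e_j}e_ke_j$; fixing $k$ and summing over $j$ gives $\overline{e_k}\,v\,\bigl(\sum_j\overline{e_j}e_ke_j\bigr)=4\,\delta_{k0}\,\overline{e_k}\,v$ by (b), so the total collapses to $v$. For the second, since $u\in\sfV_\R$ commutes with every $e_i$ one gets $\Psi(uq)=\sum_j u\,\Sc(q\overline{e_j})\otimes e_j=u\otimes\bigl(\sum_j\Sc(q\overline{e_j})e_j\bigr)=u\otimes q$, using $\Sc(e_i\overline{e_j})=\delta_{ij}$, and the general case follows by linearity. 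Hence $\Phi$ is a bijective bimodule homomorphism, giving $\sfV\cong\sfV_\R\otimes_\R\bH$.

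The hard part will not be any deep idea but the bookkeeping: one must keep track of conjugations and of the fact that in an expression such as $\overline{e_k}\,v\,\overline{e_j}e_ke_j$ the factors to the right of $v$ first multiply inside $\bH$ before acting, and that the centrality of $\sfV_\R$ is precisely what permits pulling the $\pi_0$-averaging through in the computation of $\Psi\circ\Phi$. Once these points and item (a) are handled, the whole argument rests on the two elementary identities $\sum_i\overline{e_i}e_ke_i=4\delta_{k0}$ and $\Sc(e_i\overline{e_j})=\delta_{ij}$, both immediate from the multiplication table.
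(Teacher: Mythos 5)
Your proof is correct, but note that the paper does not actually prove this proposition: it only refers the reader to \cite{Ng}, so you have supplied a self-contained argument where the text offers a citation. What you have written is the standard separability-idempotent argument for the central simple algebra $\bH$: the averaging operator $\pi_0(v)=\tfrac14\sum_{i=0}^3\overline{e_i}\,v\,e_i$ lands in the commutant $\sfV_\R$, and the two elementary identities $\sum_{i}\overline{e_i}e_ke_i=4\delta_{k0}$ and $\Sc(e_i\overline{e_j})=\delta_{ij}$ (both of which do follow from \eqref{6.1}--\eqref{6.2}; I checked the case $k=1$ and the cyclic symmetry handles the rest) make $\Psi(v)=\sum_j\pi_0(v\overline{e_j})\otimes e_j$ a two-sided inverse of $\Phi(u\otimes q)=uq$. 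The collapse $\Phi\Psi(v)=\tfrac14\sum_k\overline{e_k}\,v\,\bigl(\sum_j\overline{e_j}e_ke_j\bigr)=v$ and the computation $\Psi(uq)=u\otimes q$ both go through as you describe, and $\Phi$ does intertwine the bimodule structures since $p(uq)r=(up)(qr)=u(pqr)$ for central $u$. The one point you should make explicit rather than leave implicit: already in rewriting $\bigl(\overline{e_k}(v\overline{e_j})\bigr)e_k$ as $\overline{e_k}\bigl((v\overline{e_j})e_k\bigr)$, and again when pulling $u\in\sfV_\R$ through $\pi_0$, you use the bimodule compatibility $(pv)q=p(vq)$ for all $p,q\in\bH$. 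The paper's definition of a two-sided quaternionic vector space only requires $\lambda v=v\lambda$ for \emph{real} $\lambda$ and does not state this compatibility, so you should record it as a standing hypothesis (it is certainly intended, and without it the averaging operator is not even unambiguously defined). With that caveat added, your argument is complete and has the advantage of resting on nothing beyond the multiplication table, whereas the paper's route buys only brevity by outsourcing the proof to \cite{Ng}.
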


Next, we introduce quaternionic normed spaces.

\begin{definition}
Let $\sfV$ be a left quaternionic vector space. A function $\n{\cdot}: \sfV\to \R_0^+$ is called a norm on $V$ if
\begin{enumerate}
\item $\n{v} = 0$ iff $v=0$.
\item $\n{q v} = \abs{q} \n{v}$, for all $v\in \sfV$ and $q\in \bH$.
\item $\n{v+w} \leq \n{v} + \n{w}$, for all $v,w\in \sfV$.
\end{enumerate}
A left quaternionic vector space endowed with a norm will be called a left quaternionic normed space. 
\end{definition}

A left quaternionic normed space $\sfE$ is called \emph{complete} if it is a complete metric space with respect to the metric $d(x,y) = \n{x-y}$ induced by the norm $\n{\cdot}_\sfE$. In this case, we refer to $\sfE$ as \emph{left quaternionic Banach space}.

\begin{remark}
A left (or right) quaternionic Banach space becomes a real Banach space if the left (right) scalar multiplication is restricted to $\R$. (Cf., \cite[Section 2.3]{CGK}.)
\end{remark}

\begin{example}
The space $\bH^k$ consisting of $k$-tuples of quaternions is both a left and a right quaternionic vector space. We represent elements $\xi\in\bH^k$ as column vectors and define the quaternionic conjugate ${}^*$ of $\xi$ by
\[
\begin{pmatrix} \xi_1\\ \vdots\\ \xi_k
\end{pmatrix}^* := \begin{pmatrix} \overline{\xi_1} & \cdots & \overline{\xi_k}
\end{pmatrix},
\]
where each $\xi_j\in\bH$. When endowed with the norm
\be\label{dist}
\n{\xi}_k := \sqrt{\xi^*\xi} = \sqrt{\sum\limits_{l=1}^k \abs{\xi_j}^2},
\ee
$\bH^k$ becomes a two-sided quaternionic Banach space as $\lambda v = v \lambda$, for all $\lambda\in\R$ and for all $v\in\bH^k$.

Note that under the norm $\n{\cdot}_k$, $\bH^k$ becomes a topological space and also a complete metric space.
\end{example}

The final concept we need to introduce is that of left linear mapping between left quaternionic vector spaces.

\begin{definition}
Let $\sfV_1$ and $\sfV_2$ be left quaternionic vector spaces. A mapping $f:\sfV_1\to\sfV_2$ is called left linear if
\[
f(q v + w) = q f(v) + f(w), \quad\forall v,w\in \sfV, \forall q\in \bH.
\]
A left linear mapping is called bounded if
\[
\n{f} := \sup_{x,y \in \sfV_1, x \neq y} \frac{\n{f(x)-f(y)}_{\sfV_2}}{\n{x-y}_{\sfV_1}} < \infty.
\]
\end{definition}
\subsection{Quaternionic Fractal Interpolation}
In this section, we introduce the novel concept of quaternionic fractal interpolation. For illustrative purposes, we do not choose the most general set up but restrict ourselves to the case where the left quaternionic Banach spaces are $\sfE := \bH^k =:\sfF$, $k\in \N$.

For our purposes, we need the following function space. Let $\sfX\subset\bH^k$ be compact (as defined via the norm $\n{\cdot}_k$) and let
\[
\cB(\sfX,\bH^k) := \left\{f:\sfX\to\bH^k : \text{$f$ is bounded}\right\}.
\]
A function $f:\sfX\to\bH^k$ is called \emph{bounded} if there exists a real number $M>0$ such that $\n{f}_k \leq M$. If we define for $x\in\sfX$ and $\lambda\in\bH$
\[
(f+g) (x) := f(x) + g(x)\quad\text{and}\quad (\lambda\cdot f)(x) := \lambda\cdot f(x)
\]
then $\cB(\sfX,\bH^k)$ becomes a left quaternionic vector space. Setting for each $f\in \cB(\sfX,\bH^k)$
\[
\n{f} := \sup_{x\in\sfX} \n{f(x)}_k,
\]
then $\cB(\sfX,\bH^k)$ becomes a left Banach space. (See, for instance, \cite[Chapter IV.E.1.]{J} for the case $k=1$. The extension to $k > 1$ is straight-forward.)

For the sake of simplicity and the purpose of understanding the underlying issues, we concentrate on the special case that 
\[
\sfX := \left\{q\in \bH : \max\limits_{i=0,1,2,3} \abs{q_i} = 1\right\} \cong [-1,1]^4
\]
and that $k:=1$. The interested reader is encouraged to consider the general set-up.

We take the four-dimensional cube $\sfX$ and divide it into $n:=2^4$ congruent four-dimensional subcubes $\sfX_i$ each similar to $\sfX$ and such that $\{\sfX_i\}_{i=1}^{n}$ forms a partition of $\sfX$ in the sense of \eqref{c1} and \eqref{c2}. We leave it to the diligent reader to derive closed expressions for the $n$ injections $l_i$.

On the left Banach space $\cB(\sfX,\bH)$, we consider the RB operator $T: \cB(\sfX,\bH)\to\cB(\sfX,\bH)$ given by
\be\label{6.5}
Tf(l_i(x)) := q_i(x) + s_i(x) f(x), \quad x\in \sfX, \;\;i \in \N_n,
\ee
where $q_i, s_i:\sfX\to\bH$ are bounded functions. Clearly, $T f\in \cB(\sfX,\bH)$ under these assumptions. Note that we left-multiply $f$ by $s_i$. 

\begin{remark}
In the case of real Banach spaces, the RB operator is affine, i.e., $T - T(0)$ is a linear operator. In the quaternionic setting this is no longer true: $T - T(0)$ is not a left linear operator. It is only if $s_i (x) \in \R$.
\end{remark}

To simplify notation, we introduce the following abbreviation for the $m$-fold composition of functions from an IFS. Let $\cF:=\{f_1, \ldots, f_n\}$. We write
\[
f_{i_m i_{m-1} \cdots i_1} := f_{i_m}\circ f_{i_{m-1}}\circ f_{i_1},
\]
where each $i_j\in \N_n$.

For each $m\in \N$, the $m$-fold application of $T$, $T^m f := T(T^{m-1} f)$, can be written as
\[
T^m f (l_{i_m i_{m-1} \cdots i_1}(x)) = \sum_{k=1}^{m} \prod_{j=1}^{k-1} s_{i_j} (x) q_{k} (x)+ \prod_{k=1}^m s_{i_k}(x) f(x),
\]
where the factors in the products $\prod$ are left-multiplied and where we set the empty product equal to 1. Notice the reverse order of the indices $i_m i_{m-1} \cdots i_1$ on the left- and right-hand side.

Continuing as in the previous section to ensure the existence of a fixed point $\psi$ for the RB operator \eqref{6.5} or a bounded solution of the associated system of functional equations, we obtain in summary the next result.
\begin{theorem}
For the above setting, the RB operator $T$ defined in \eqref{6.5} has a unique fixed point $\psi\in\cB(\sfX,\bH)$, i.e.,
\be
T\psi = \psi\quad\Longleftrightarrow\quad\psi (l_i(x)) = q_i(x) + s_i(x) \psi(x), \quad x\in \sfX, \;\; i \in \N_n,
\ee
provided that 
\[
\max\limits_{i\in \N_n} \sup\limits_{x\in \sfX_i} |s_i(x)| < 1.
\]
\end{theorem}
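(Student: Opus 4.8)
The plan is to mimic the proof of Theorem~\ref{sol}, the only new feature being that the scalars $s_i(x)$ now live in the non-commutative algebra $\bH$ and are left-multiplied onto $f$. First I would observe that, since $\sfX$ is compact and the $l_i$ are injections with $\{l_i(\sfX)\}_{i=1}^n$ a partition of $\sfX$ (in the sense of \eqref{c1}--\eqref{c2}), each $l_i^{-1}$ is well-defined on $\sfX_i := l_i(\sfX)$ and the RB operator may equivalently be written as
\[
Tf(x) = (q_i\circ l_i^{-1})(x) + (s_i\circ l_i^{-1})(x)\,(f\circ l_i^{-1})(x), \quad x\in\sfX_i,\; i\in\N_n.
\]
Since $q_i$, $s_i$ are bounded and $\n{\lambda v}_k = \abs{\lambda}\n{v}_k$ for $\lambda\in\bH$, $v\in\bH^k$ (here $k=1$), one gets $\n{Tf} \le \max_i\n{q_i} + (\max_i\sup_{x}\abs{s_i(x)})\n{f} < \infty$, so $T$ maps $\cB(\sfX,\bH)$ into itself.

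Next I would verify contractivity. For $f,g\in\cB(\sfX,\bH)$ and $x\in\sfX_i$, the affine parts $q_i\circ l_i^{-1}$ cancel and left-distributivity of quaternionic multiplication gives
\[
Tf(x) - Tg(x) = (s_i\circ l_i^{-1})(x)\,\bigl((f-g)\circ l_i^{-1}\bigr)(x),
\]
whence $\n{Tf(x)-Tg(x)}_k = \abs{(s_i\circ l_i^{-1})(x)}\,\n{(f-g)(l_i^{-1}(x))}_k$ by the norm axiom $\n{qv} = \abs{q}\n{v}$. Taking the supremum over $x\in\sfX_i$, substituting $\xi = l_i^{-1}(x)$ so $\xi$ ranges over $\sfX$, and then maximizing over $i$ yields
\[
\n{Tf - Tg} \le \Bigl(\max_{i\in\N_n}\sup_{x\in\sfX_i}\abs{s_i(x)}\Bigr)\n{f-g} =: s\,\n{f-g},
\]
with $s < 1$ by hypothesis. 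Since $\cB(\sfX,\bH)$ is a left quaternionic Banach space — in particular a complete metric space under $d(f,g) = \n{f-g}$ — the Banach Fixed Point Theorem applies and produces a unique $\psi\in\cB(\sfX,\bH)$ with $T\psi = \psi$; unwinding the definition of $T$ gives the stated functional equations, and conversely any bounded solution is a fixed point.

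The main subtlety — and the place where care is genuinely required compared to the real case — is that $T$ is \emph{not} affine: $T - T(0)$ fails to be left linear because $s_i$ need not be real-valued (this is exactly the content of the Remark following \eqref{6.5}). So I would be careful \emph{not} to invoke any linearity of $T$; the Banach Fixed Point Theorem only needs $T$ to be a contraction of a complete metric space, which the estimate above establishes directly. The one thing to double-check is that the multiplicative norm identity $\n{qv}_k = \abs{q}\,\n{v}_k$ holds for the space at hand — but this is precisely axiom (2) in the definition of a quaternionic norm, and for $\bH = \bH^1$ it is just $\abs{qv} = \abs{q}\abs{v}$, which follows from $\abs{q}^2 = q\overline q$ and associativity. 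With that in hand, no genuine obstacle remains and the proof goes through verbatim from the real case. One could add a remark that, as in the earlier sections, the Picard iterates $\psi_k = T\psi_{k-1}$ converge to $\psi$ with the geometric error bound $\n{\psi - \psi_k} \le \frac{s^k}{1-s}\n{\psi_1-\psi_0}$, and that the explicit form of $T^m$ displayed just before the theorem makes this convergence transparent.
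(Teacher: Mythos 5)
Your proposal is correct and follows essentially the same route the paper takes: the paper gives no separate proof but simply says to continue ``as in the previous section,'' i.e., to repeat the contraction argument of Theorem~\ref{sol}, which is exactly what you do, with the additional (and appropriate) care that the multiplicativity $\abs{qv}=\abs{q}\abs{v}$ of the quaternionic norm substitutes for the real scalar estimate and that only contractivity, not affinity of $T$, is needed for the Banach Fixed Point Theorem. No gaps.
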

The fixed point $\psi$ is then called a \emph{bounded quaternionic fractal function}. 

\begin{remark}
A similar result exists of course for right Banach spaces and RB operators where the functions $s_i$ are right-multiplied onto $f$. Note that the fixed point $\psi$ depends on this right or left multiplication.
\end{remark}

\begin{example}
This example shows the versatility of quaternionic fractal interpolation. We choose as $\sfX := \{q\in\bH : \Sc q \in [0,1) \wedge \Ve q = 0\}$. Note that $\sfX \cong [0,1] \subset\R$. Define injections $l_i:\sfX\to\sfX$ as follows:
\[
l_1 (x) : = \tfrac12 x\quad \text{and} \quad l_2 (x):= \tfrac12(x+1).
\]
Clearly, $\{l_1(\sfX), l_2(\sfX)\}$ is a partition of $\sfX$ satisfying \eqref{c1} and \eqref{c2}.

Moreover, let $q_1 := e_0 + 2e_1-e_3+3e_4$ and $q_2 := -e_0 - 2e_1 + 2e_3+e_4$ be two quaternions. We set
\[
q_1 (x) := (1-q_1) x \quad \text{and} \quad q_2(x) := q_2 x^2. 
\]

Then, $q_1,q_2\in \cB(\sfX,\bH)$. We define an RB operator $T$ by
\[
Tf (x) := \begin{cases} 2(1-q_1) x + s_1 f(2x), & x\in [0,\frac12),\\
q_2(2x-1)^2 + s_2 f(2x-1), & x\in [\frac12,1),
\end{cases}
\]
where $s_1 := \frac{1}{10}e_0 + \frac12 e_1 - \frac15 e_2 - \frac{1}{10} e_3$ and $s_1 := -\frac{1}{5}e_0 + \frac15 e_1 - \frac35 e_2 + \frac{1}{10} e_3$. Note that $\abs{s_1} = \frac{1}{10}\sqrt{31}$ and $\abs{s_2} = \frac{1}{10}\sqrt{45}$. Thus, $\max\{s_1,s_2\} < 1$.

Therefore, the RB operator $T$ is a contraction on $\cB(\sfX,\bH)$ and has a unique fixed point $\psi\in\cB(\sfX,\bH)$. The following Figure \ref{fig5} shows the projection of the graph of $\psi$ onto the $(e_0,e_i)$-plane for $i=0,1,2,3$.

\begin{figure}[h!]
\begin{center}
\includegraphics[width=5cm, height=4cm]{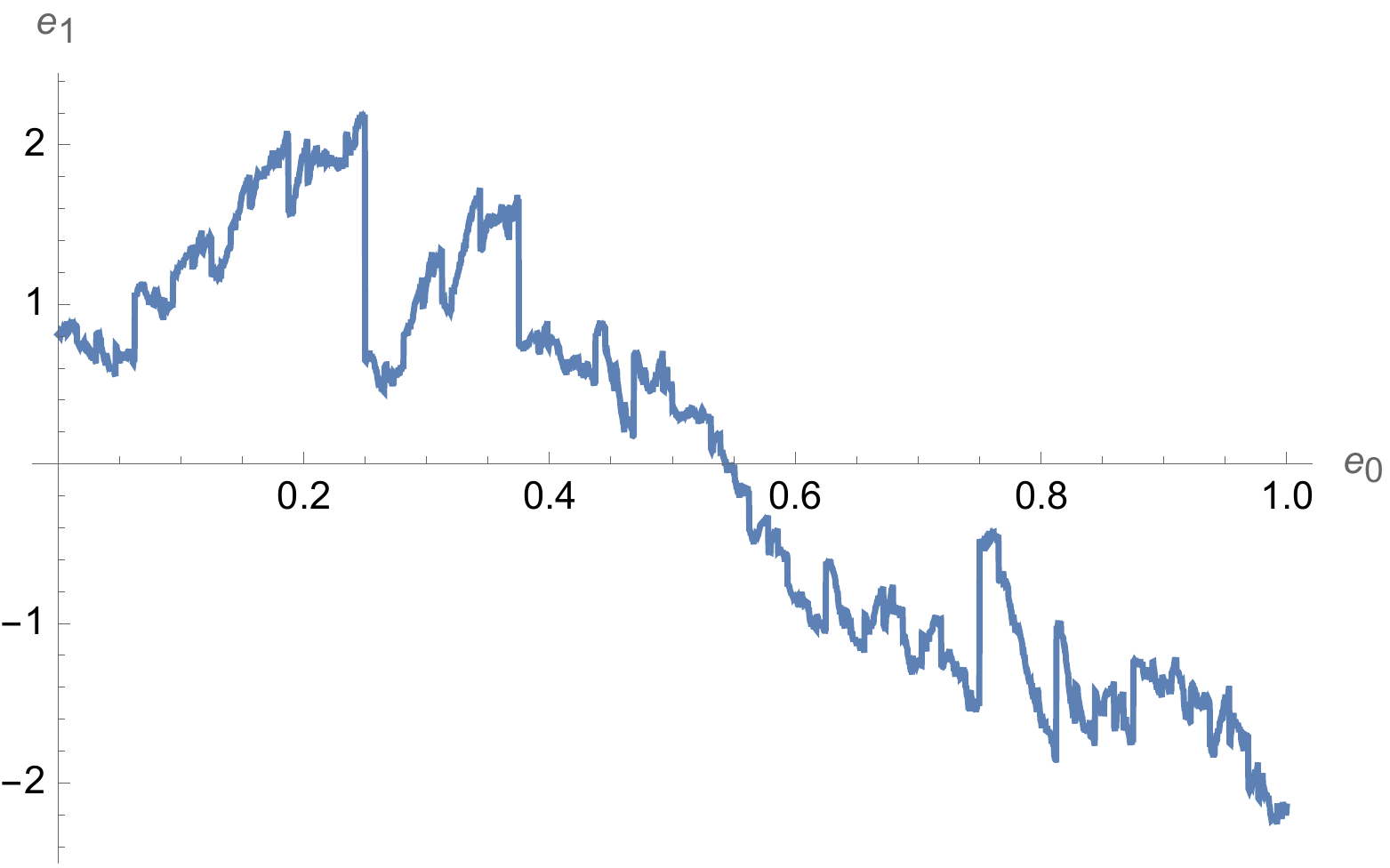}\hspace*{1cm}\includegraphics[width=5cm, height=4cm]{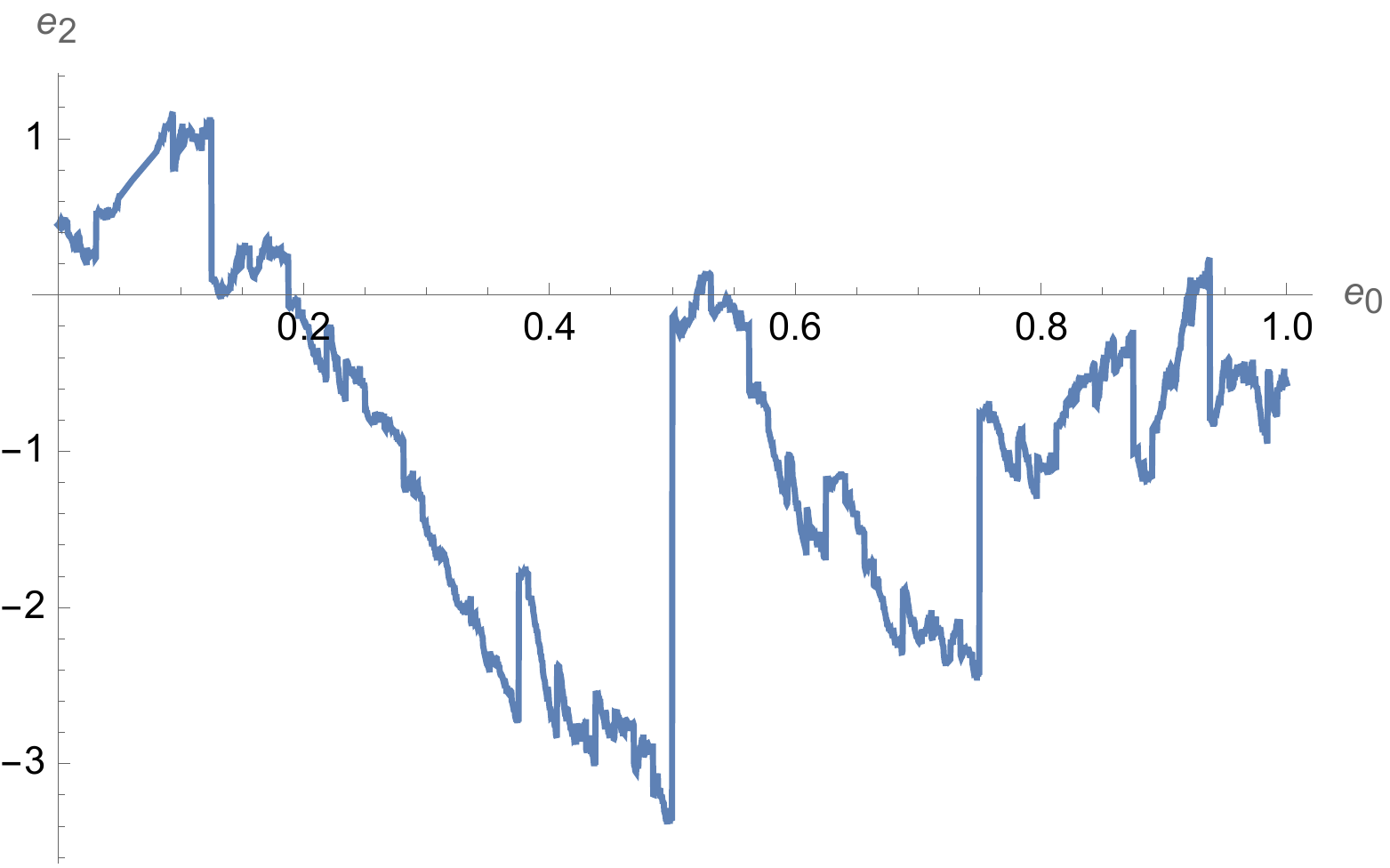}\\
\includegraphics[width=5cm, height=4cm]{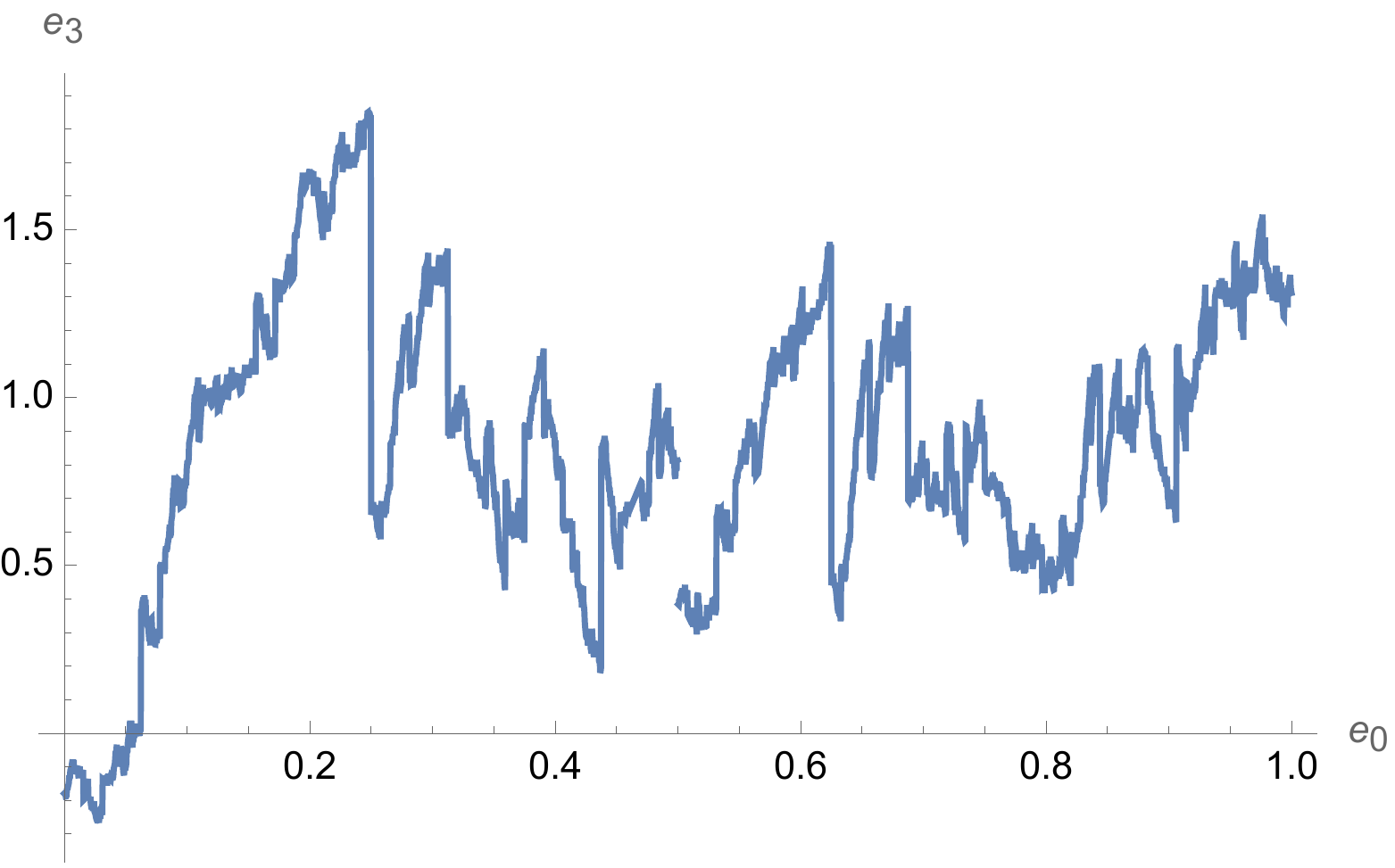}\hspace*{1cm}\includegraphics[width=5cm, height=4cm]{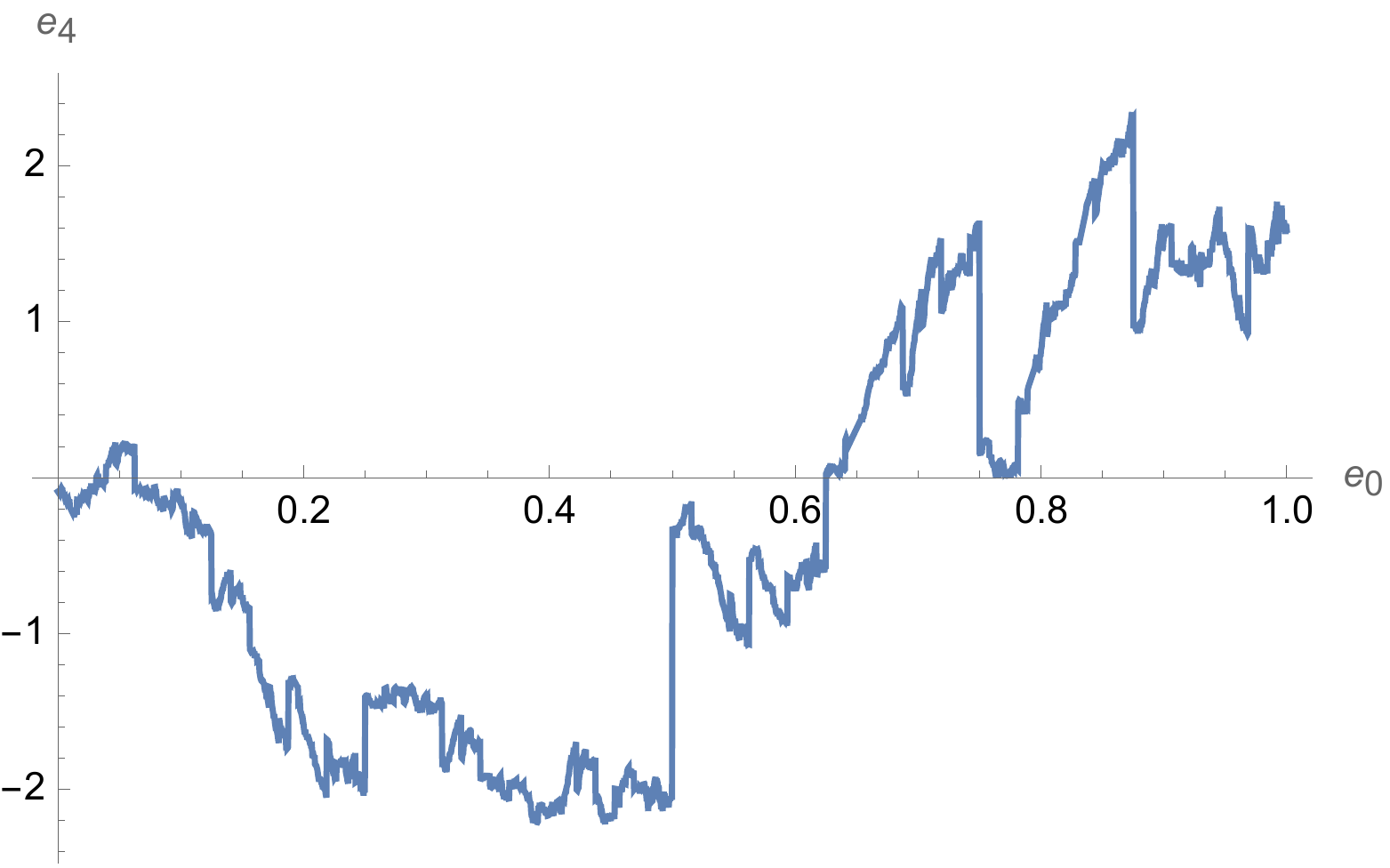}
\end{center}
\caption{The projections of $\psi$ onto the $(e_0,e_i)$-planes.}\label{fig5}
\end{figure}
As $\psi$ can be written as $\psi = \sum\limits_{i=0}^3 \psi_i e_i$, we display in Figure \ref{fig6} the parametric plots $(\psi_0, \psi_1, \psi_2)$ and $(\psi_0, \psi_2, \psi_4)$.
\begin{figure}[h!]
\begin{center}
\includegraphics[width=5cm, height=4cm]{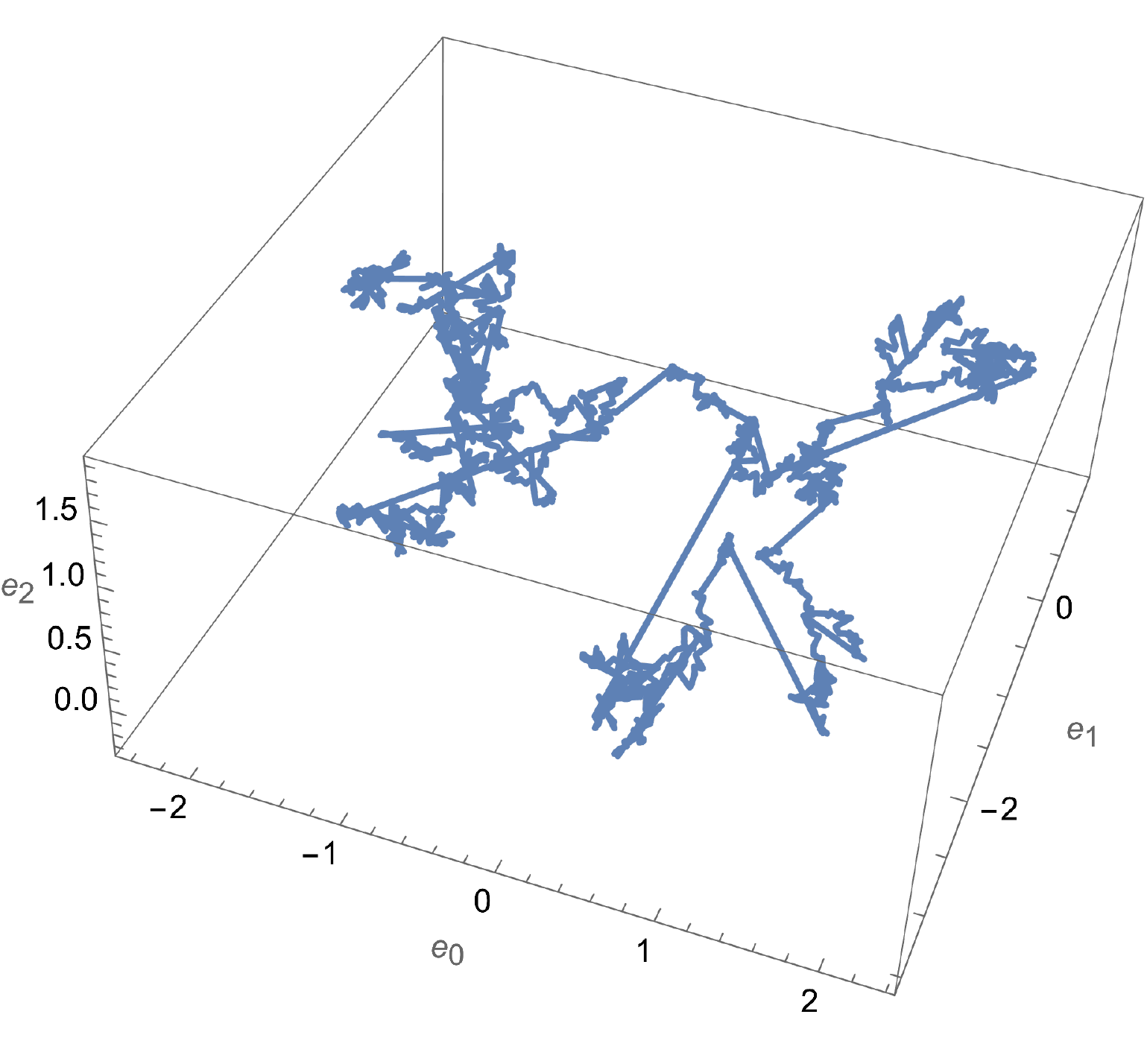}\hspace*{1cm}\includegraphics[width=5cm, height=4cm]{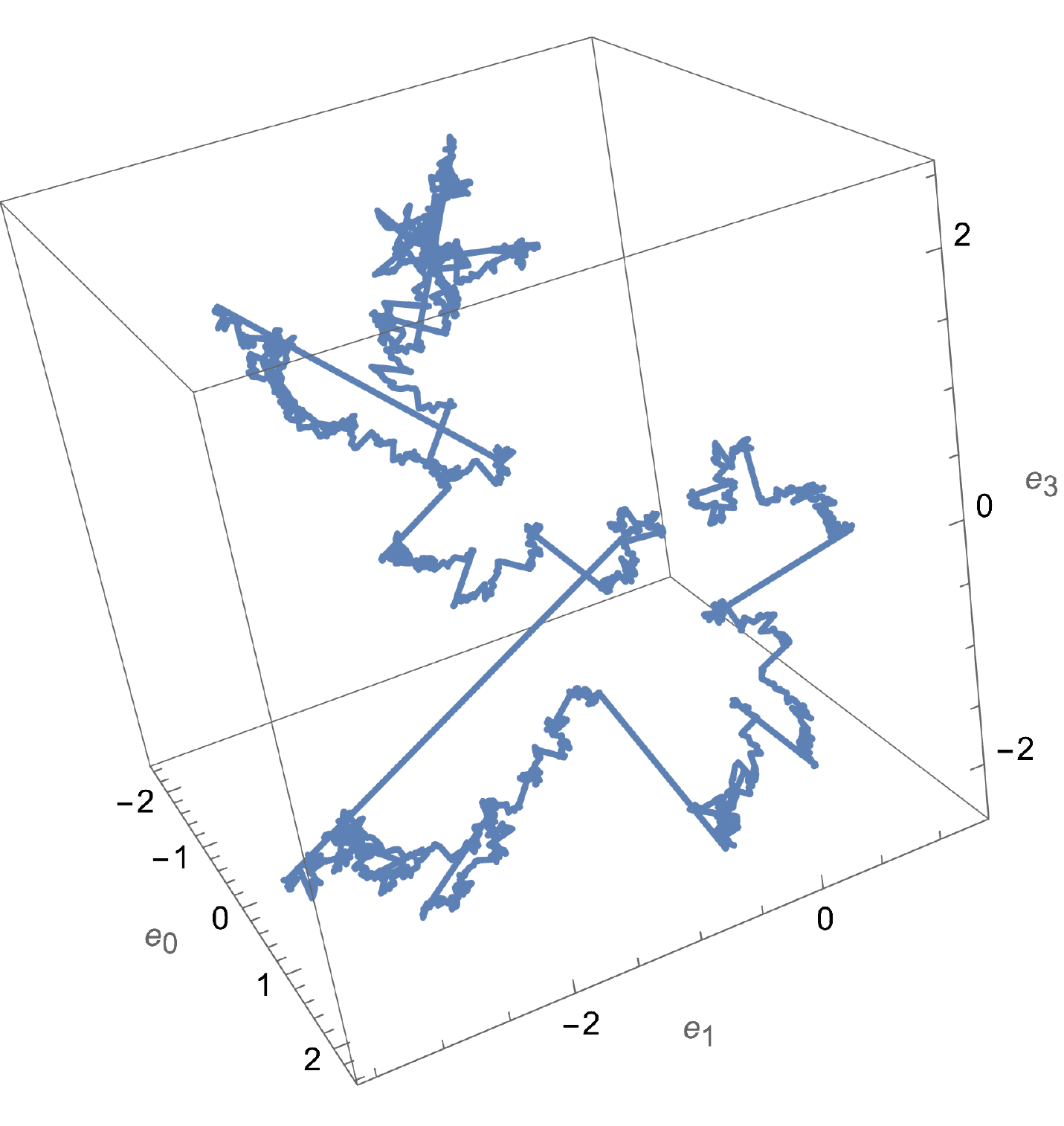}\end{center}
\caption{Some parametric plots of the components of $\psi$.}\label{fig6}
\end{figure}
\end{example}
\section{Summary}
In this chapter, we considered several aspects of fractal interpolation and identified their commonalities. Fractal interpolation is based either on solving a system of functional equations \cite{SB,SB1} or on obtaining the fixed point of an operator \cite{massopust,massopust1}. Both approaches yield the same results and are interchangeable. 

In particular, we presented
\begin{itemize}
\item global fractal interpolation. This was the original approach undertaken in \cite{B2} which is based on a geometric setting. The analytic setting based on an RB operator commenced in \cite{bedford,dubuc1,dubuc2} and produced numerous generalizations and extensions. The main idea is to use a fixed bounded subset $\sfX$ of a normed space $\sfE$ to produce a partition of $\sfE$ over which copies of functions defined on the global set $\sfX$ reside. Each of the partitioning sets is then iteratively partitioned into subsets thus producing the fractal nature of the limiting object.
\item local fractal interpolation. Here, we allow subsets of a bounded subset $\sfX$ of a normed space $\sfE$ to produce a partition of $\sfX$. These subsets can be repeated in the construction of the partition and need no longer be defined by contractive injections. Here, functions defined on the local subsets reside over partitions of $\sfX$. This type of local interpolation was successfully employed in fractal compression and image analysis. (Cf., for instance, \cite{BH}.)
\item non-stationary fractal interpolation. In this approach, we do no longer require that we keep the same quantities at each level of iteration but allow them to vary. This set-up is very similar to non-stationary subdivision and was investigated in this respect in \cite{DLM,LDV,m4}. This new methodology shows great potential for future investigations into the subject. 
\item quaternionic fractal interpolation. In this novel approach, we kept the set-theoretic structure but allowed for a change in the algebraic structure of the underlying sets. By requiring that we are using a division algebra such as the quaternions or more generally hypercomplex numbers, we obtain even more flexibility in the construction of fractal interpolants. The investigation into this type of setting commenced in \cite{m5} and promises to be a fruitful field of investigation in years to come.
\end{itemize}

\end{document}